\numberwithin{equation}{section}
\newcommand{\dist}{\operatorname{dist}}
\newcommand{\norm}[1]{\left\|#1\right\|}
\newcommand{\argmin}{\operatorname{arg}\min}
\renewcommand{\div}{\operatorname{div}}
\newcommand{\st}{\;:\;}
\newcommand{\abs}[1]{\left|#1\right|}
\newcommand{\R}{\mathbb{R}}
\newcommand{\N}{\mathbb{N}}
\renewcommand{\S}{\mathbb{S}}
\newcommand{\rev}{\color{magenta}}
\renewcommand{\rev}{}
\newcommand{\nc}{\normalcolor}
\theoremstyle{plain}
\newtheorem{theorem}{Theorem}[section]
\newtheorem{proposition}[theorem]{Proposition}
\theoremstyle{definition}
\newtheorem{definition}[theorem]{Definition}
\newtheorem{example}[theorem]{Example}
\theoremstyle{remark}
\newtheorem{remark}{Remark}
\numberwithin{equation}{section}
\let\blx@rerun@biber\relax
\begin{document}

\title{The infinity Laplacian eigenvalue problem:\\ reformulation and a numerical scheme}

\author{Farid Bozorgnia\thanks{Department of Mathematics, Instituto Superior T\'{e}cnico, Lisbon, \href{mailto:farid.bozorgnia@tecnico.ulisboa.pt}{farid.bozorgnia@tecnico.ulisboa.pt}}
\and Leon Bungert\thanks{Institute of Mathematics, University of Würzburg, Emil-Fischer-Str. 40, 97074 Würzburg, Germany. \href{mailto:leon.bungert@uni-wuerzburg.de}{leon.bungert@uni-wuerzburg.de}}
\and Daniel Tenbrinck\thanks{Department of Mathematics, Friedrich-Alexander University Erlangen-N\"urnberg, \href{mailto:daniel.tenbrinck@fau.de}{daniel.tenbrinck@fau.de}}}

\maketitle

\begin{abstract}
In this work, we present an alternative formulation of the higher eigenvalue problem associated to the infinity Laplacian, which opens the door for numerical approximation of eigenfunctions. 
A rigorous analysis is performed to show the equivalence of the new formulation to the traditional one. 
Subsequently, we present consistent monotone schemes to approximate infinity ground states and higher eigenfunctions on grids.
We prove that our method converges (up to a subsequence) to a viscosity solution of the eigenvalue problem, and perform numerical experiments which investigate theoretical conjectures and compute eigenfunctions on a variety of different domains.
\\
{\bf Keywords: } Infinity Laplacian operator, Infinity ground states, Nonlinear Eigenvalue problems,  Monotone schemes.
\\
{\bf AMS Subject Classification: }  35D40, 35P30, 65N06, 65N12, 65N25.
\end{abstract}

\section{Introduction}
\label{s:introduction}

The infinity Laplacian equation was introduced by G. Aronsson in \cite{aronsson1965minimization} and has been extensively studied in the following years. It can be categorized as a nonlinear degenerate elliptic partial differential equation (PDE), which has interesting connections to Lipschitz extensions \cite{crandall2001remark,crandall2001optimal} and also to probabilistic games \cite{peres2009tug}. For important contributions to the analysis of the infinity Laplacian we refer to \cite{aronsson1965minimization, aronsson2004tour, crandall2001optimal}. Regarding the uniqueness of Lipschitz extensions and the theory of absolute minimizers we refer the interested reader to the work of Jensen \cite{jensen1993uniqueness}, and further works in \cite{aronsson2004tour,crandall2001remark,juutinen1998minimization}. 
A numerical approximation of the infinity Laplacian equation is investigated by Oberman in \cite{oberman2005convergent}, where he introduced a convergent finite difference scheme. 
In the more general context of finite weighted graphs discretizations of the infinity Laplacian operator have been studied and applied for data processing tasks by Elmoataz et al. in~\cite{elmoataz2017nonlocal,
elmoataz2015p,calder2019consistency}.
Continuum limits of infinity Laplacian type equations, i.e., convergence of such discretizations as the graph approximates a continuum domain, have been proven using different techniques such as viscosity solutions \cite{calder2019consistency}, Gamma-convergence \cite{roith2023continuum}, and comparison principles \cite{bungert2023uniform,bungert2022ratio}.

In contrast, numerical methods for the eigenvalue problem of the infinity Laplacian have received much less attention so far.
As we will recap in \cref{s:background} below, the infinity Laplacian \rev and its eigenfunctions arise as limits of \nc $p$-Laplacian operators and their eigenfunctions.
In the last decade nonlinear eigenvalue problems of the $p$-Laplacian operator have gained increasing attention \cite{le2006eigenvalue, lindqvist1990equation} and \rev were used for signal processing applications \cite{gilboa2018nonlinear,cohen2020introducing,bungert2019computing}.
\nc 
Horak discussed numerical approximations for the two smallest eigenvalues of the $p$-Laplacian operator for different values of $1<p\leq 10$ in \cite{horak2011numerical}, see also the work \cite{bozorgnia2016convergence} of the first author of this paper.
For large values of $p$, however, it turns out to be difficult to compute eigenvalues and corresponding eigenfunctions of the $p$-Laplacian due to stiffness of the discretized systems. 
On the other hand, the eigenvalue problem for the infinity Laplacian operator has been analytically studied for example by Juutinen, Lindqvist, Kawohl, Manfredi, and Saksman in \cite{juutinen2005higher, juutinen1999infinity,juutinen1999eigenvalue,lindqvist2000superharmonicity, yu2007some}. 
In \cite{hynd2016inverse} the authors investigate a inverse iteration to solve the $p$-Laplacian eigenvalue problem and, as a byproduct, prove convergence to infinity Laplacian eigenfunctions as $p$ is sent to $\infty$ along the iteration. 
To the best of our knowledge a direct numerical approximation of eigenfunctions of the infinity Laplacian operator has not been investigated so far.

\rev 
Let us mention that there is a large body of literature which deals methods to solve abstract nonlinear eigenvalue problems related to the minimization of a Rayleigh quotient.
Some methods that are worth mentioning in this context are gradient flows of the Rayleigh quotient like \cite{feld2019rayleigh,bungert2019asymptotic}, or iterative methods like for instance nonlinear power methods \cite{bungert2021nonlinear}.
Comprehensive reviews on the topic which also relate continuous flows and discrete iterations are \cite{gilboa2021iterative,bungert2022gradient}.
However, as we will see all of these methods are not applicable for finding eigenfunctions of the infinity Laplace operator since such functions are special minimizers of a Rayleigh quotient with infinitely many other minimizers.
Furthermore, these flows and iterations cannot deal with higher eigenfunctions than ground states.

Let us now introduce the infinity Laplacian eigenvalue problem that we study in this paper.
For this, \nc let $\Omega$ be an open, bounded domain in $\mathbb{R}^d$.  
We consider the following Dirichlet eigenvalue problem of the infinity Laplacian operator as studied in \cite{juutinen2005higher}.
One looks for a function $u\in W^{1,\infty}_0(\Omega)$ which is a viscosity solution of
\begin{equation}
\label{eq:higher_efs}
0\ = \ 
\begin{cases}
\min(|\nabla u|- \Lambda u,  -\Delta_\infty u) & \quad \text{where }u>0, \\
-\Delta_\infty u & \quad \text{where }u=0,\\
\max(- |\nabla u|- \Lambda u,  -\Delta_\infty u ) & \quad \text{where }u < 0.
\end{cases}
\end{equation}
Here, $\Lambda>0$ denotes a corresponding eigenvalue of the infinity Laplacian $\Delta_\infty$.
Positive solutions are referred to as ground states and fulfill the simpler equation
\begin{equation}\label{eq:first_ef}
   0 \ = \ \min(|\nabla u|-\Lambda u,-\Delta_\infty u).
\end{equation}
Both \labelcref{eq:higher_efs} and \labelcref{eq:first_ef} turn out to be rather challenging from a numerical point of view. 
One difficulty arises from the case distinction in \labelcref{eq:higher_efs}.
These different cases are based on the unknown sign of the solution itself, and thus one is not able to implement a numerical approximation scheme directly.
Furthermore, both equations are non-smooth and therefore require a careful monotone discretization.
Finally, another difficulty, which is a problem for ground states and higher eigenfunctions, is that solutions are non-unique.
One source of non-uniqueness, namely the scaling invariance of \labelcref{eq:higher_efs} and \labelcref{eq:first_ef}, can be eliminated by searching for normalized solutions, satisfying, e.g., $\norm{u}_{\infty}=1$.
However, the second source of non-uniqueness is intrinsic for the infinity Laplacian eigenvalue problem \cite{hynd2013nonuniqueness} unless on restricts oneself to very special stadium-like domains \cite{yu2007some}.

The \textbf{main contributions} of this work are the following:
First, we give a reformulation of~\labelcref{eq:higher_efs} as \emph{one} equation which avoids the distinction of cases.
Second, we define consistent monotone schemes on grids for approximating solutions of~\labelcref{eq:higher_efs} and~\labelcref{eq:first_ef}, and prove subsequential convergence to a viscosity solution.
Third, we perform extensive numerical experiments and compute various solutions of the infinity Laplacian eigenvalue problem on different domains.
The proposed numerical scheme allows to investigate open theoretical conjectures, e.g., we numerically confirm that the infinity ground state and the so-called infinity harmonic potential disagree on a square domain, proved recently in \cite{brustad2022solution,brustad2023infinity}.

The structure of this paper is as follows: \cref{s:background} recalls the mathematical background of the $p$- and infinity Laplacian operators and their respective eigenvalues and eigenfunctions. 
In \cref{s:formulation} we propose the alternative formulation of the infinity Laplacian eigenfunction problem \labelcref{eq:higher_efs} and prove their equivalence. 
Based on the reformulation, we define consistent monotone schemes for approximating eigenfunctions in \cref{s:numerics}. 
In \cref{s:results} we show numerical results using the proposed approximations.

\section{Mathematical background}
\label{s:background}

To make this paper more self-contained,  we begin by recalling the concept of viscosity solutions in \cref{ss:viscosity}. 
This is the suitable solution concept for both the infinity Laplacian equation and the eigenvalue problem~\labelcref{eq:higher_efs}.  
Furthermore, we recap properties of the infinity Laplacian equation, which is a substantial part of the eigenvalue problem, in \cref{ss:inf_L_eq}.
Finally, we summarize the analytic relationship of the $p$- and infinity Laplacian operators and discuss properties of their respective eigenvalues and eigenfunction in \cref{ss:p-eigenproblem_and_limit}.

\subsection{Viscosity solutions}
\label{ss:viscosity}
We focus on PDEs of the following general form
\begin{equation}
\label{eq:elliptic_general}
\begin{cases}
F(u(x), \nabla u(x), D^2u(x)) \ = \ 0,\quad&\forall x\in{\Omega},
\\
u(x) \ = \ 0,\quad&\forall x\in\partial\Omega,
\end{cases}
\end{equation}
for a real-valued function $u \colon \overline{\Omega} \rightarrow \mathbb{R}$, $F \colon \overline{\Omega} \times\mathbb{R}\times\mathbb{R}^n\times\mathbb{S}^n \to \mathbb{R}$, and $\mathbb{S}^n$ is the space of real, symmetric $n\times n$-matrices. 
We further assume that $F$ is \emph{degenerate elliptic}, meaning
\begin{equation}
\label{eq:degen_elliptic}
F(u, p, M)\leq F(u, p, N)\quad\text{ if } N\leq M
\end{equation}
for all $u\in\R$ and $p\in\mathbb{R}^n$. By $N \leq M$ in \labelcref{eq:degen_elliptic} we denote that the matrix $M - N$ is positive semi-definite. Any equation fulfilling these properties is called \textit{degenerate elliptic}. For a comprehensive overview on the theory of viscosity solutions we refer the interested reader to the  seminal  paper of Crandall, Ishii, and Lions in \cite{crandall1992user}.

\begin{definition}\label{visc_definition}
Any upper (respectively lower) semi-continuous function $u:\overline\Omega\rightarrow\mathbb{R}$ is called a \emph{viscosity subsolution} (respectively  \emph{supersolution}) of \labelcref{eq:elliptic_general} if $u(x)\leq 0$ (respectively $u(x)\geq 0$) for all $x\in\partial\Omega$ and for all $\phi\in C^2(\overline\Omega)$ and all $x\in\Omega$ such that $u-\phi$ has a local maximum (respectively minimum) at $x$, we have
\[
F(\phi(x),\nabla\phi(x), D^2 \phi(x))\leq 0, \quad  (\text{respectively, }    F(\phi(x),\nabla\phi(x), D^2 \phi(x))\geq 0).
\]
A continuous function $u \colon \overline\Omega \rightarrow \mathbb{R}$ is said to be a \emph{viscosity solution} if it is both a viscosity sub- and supersolution of \labelcref{eq:elliptic_general}.
\end{definition}
\begin{example}
In the following, we consider the Eikonal equation on the interval $\Omega = (-1, 1)$
\begin{equation*}
\begin{cases}
|u'(x)| -1 \ = \ 0 \quad & \text{ for } x \in \Omega, \\
u(x) \ = \ 0 \quad & \text{ for } x \in \partial\Omega.
\end{cases}
\end{equation*}
It is clear that there is no classical $C^1(\Omega)$ solution to this problem. However, one can verify that there exists a unique solution in the viscosity sense given by $u(x) \ = \ 1 - |x|$ for $x \in [-1, 1]$.
Any $C^1$-function $\phi$ touching $u$ from above in $x=0$ has a slope $|\phi'(0)| \leq 1$ and obviously there is exists no such function touching $u$ from below in $x=0$.
\end{example}

\subsection{The infinity Laplacian equation}
\label{ss:inf_L_eq}
The infinity Laplacian operator is defined as follows:
\begin{equation}
\label{eq:infLapOp}
  \Delta_{\infty} u \ = \  (\nabla u)^T D^2u \nabla u  \ = \ \sum_{i,j=1}^{d} \frac{\partial u}{\partial x_i} \frac{\partial u}{\partial x_j}\frac{\partial^2 u}{\partial x_i \partial x_j}.
\end{equation}
Sometimes the operator in \labelcref{eq:infLapOp} is normalized by $\frac{1}{|\nabla u|^2}$, e.g., cf. \cite{barron2008infinity}, but this normalization does not change the equations we are considering here.
A function $u$ is said to be infinity harmonic if it solves the homogeneous infinity Laplacian equation
\begin{equation}
\label{eq:infLapEq}
 \Delta_{\infty} u \ = \ 0 .
\end{equation}
in the viscosity sense.
\rev This equation can be derived \nc as the limit of a sequence of $p$-Laplacian equations $\Delta_p u = \div(|\nabla u|^{p-2}\nabla u) = 0$ under certain boundary conditions for $p \rightarrow \infty$.

The infinity Laplacian equation is related to the absolute minimal Lipschitz extension (AMLE) problem \cite{aronsson1965minimization, aronsson2004tour,crandall2001remark,jensen1993uniqueness}. 
In this setting one searches for a continuous real-valued function which has the smallest possible Lipschitz constant in every open set whose closure is compactly contained in $\Omega$. 
This interpretation has some advantages as it directly leads to numerical approximation schemes for solutions of the infinity Laplacian equation \labelcref{eq:infLapEq}.
A function $u\in W^{1,\infty}(\Omega)$ is called absolutely minimizing Lipschitz extension of a Lipschitz function $g:\partial\Omega\to\R$ if $u\vert_{\partial\Omega}=g$ and
\[
\norm{\nabla u}_{L^\infty(\Omega')}\leq\norm{\nabla v}_{L^\infty(\Omega')},
\]
for all open sets $\Omega'\subset\Omega$ and all $v$ such that $u-v\in W^{1,\infty}_0(\Omega')$.
The relationship between an AMLE and the infinity Laplacian is stated in 
\begin{theorem}[\rev Corollary 3.14 \nc in \cite{jensen1993uniqueness}]
\rev 
A function $u\in\mathrm{Lip}(\Omega)$ is an AMLE of a Lipschitz function $g:\partial\Omega\to\R$ if and only if $u$ is a viscosity solution of the infinity Laplacian equation with $u=g$ on~$\partial\Omega$.
\nc
\end{theorem}
Clearly, one can exploit this relationship to numerically solve the infinity Laplacian equation, i.e., to construct absolutely minimal Lipschitz extensions in a discrete setting~\cite{oberman2005convergent}. 

Let us finally remark that infinity harmonic functions might not be $C^2$ differentiable in general. 
A well-known example from \cite{aronsson1965minimization} is given by
\[
u(x, y) \ = \ |x|^{\frac{4}{3}} - |y|^{\frac{4}{3}},
\]
which is a $C^{1, 1/3}$ infinity harmonic function. 
To the best of our knowledge it is still an open problem whether all viscosity solutions of the infinity Laplacian equation are in $C^1$. 
Evans and coauthors proved $C^{1, \alpha}$ regularity of viscosity solutions for the case $d = 2$ in \cite{evans2008c} and differentiability in general dimension in \cite{evans2011everywhere}.

\subsection{Eigenfunctions of the \texorpdfstring{$p$}{p}-Laplacian and their limit}\label{ss:p-eigenproblem_and_limit}
The eigenvalues and eigenfunctions of the infinity Laplacian can be \rev constructed \nc as the limit of respective eigenvalues and eigenfunctions of the $p$-Laplacian operator, see \cite{juutinen1999infinity,juutinen1999eigenvalue,lindqvist2000superharmonicity} for details. 
For this reason we shortly recall the definition of eigenvalues of the $p$-Laplacian in the following and also refer the interested reader to \cite{le2006eigenvalue, lindqvist1990equation}.

For $1 \leq p<\infty$ the first (smallest) eigenvalue of the $p$-Laplacian operator has a variational form and is given by the Rayleigh quotient
\begin{equation}\label{eq:first_p_eigenvalue}
\lambda_{1}(p) \ = \ \underset{\varphi \in W^{1,p}_{0} (\Omega)}  {\operatorname{inf}} \, \frac{\int_{\Omega}|\nabla  \varphi|^{p} dx}{\int_{\Omega}|\varphi|^{p} dx} \ = \ \underset{\varphi \in W^{1,p}_{0} (\Omega)}  {\operatorname{inf}} \frac{\| \nabla  \varphi\|_{p}^{p}}{\| \varphi\|_{p}^{p}},
\end{equation}
for which the minimization is performed over all non-zero functions in the Sobolev space $W^{1,p}_{0} (\Omega).$  Any minimizer of \labelcref{eq:first_p_eigenvalue} has to satisfy the following Euler-Lagrange equation
\begin{equation}\label{eq:p-Laplac_ground_state}
\left \{
\begin{array}{ll}
 -\operatorname{div}(|\nabla u|^{p-2} \nabla u)\ =  \ \lambda_1(p) |u|^{p-2} u \  & \quad \text{ in  }  \Omega,\\
u \ = \ 0   & \quad \text{on  }  \partial \Omega,\\
 \end{array}
\right.
\end{equation}
which has to be interpreted in the usual weak sense. 
\rev 
It is well known that solutions of this equation, referred to as eigenfunctions of the $p$-Laplacian, are unique modulo global scaling.
A particularly elegant proof of this result can be found in \cite{kawohl2006positive}.
Higher eigenfunctions are solutions of \labelcref{eq:p-Laplac_ground_state} with $\lambda_1(p)$ replaced by $\lambda>\lambda_1(p)$.
\nc 
In particular, according to \cite{juutinen2005higher} the second $p$-eigenvalue $\lambda_{2}(p)$ can be defined as
\[
\lambda_2(p)=\min{\{ \lambda \in \mathbb{R} \st \lambda \text{ is a $p$-eigenvalue and }  \lambda> \lambda_{1} }\}.
\]

Analogously to \labelcref{eq:first_p_eigenvalue}, the first eigenvalue of the infinity Laplacian, denoted by $\Lambda_{1}$, is given by
\begin{equation}\label{eq:var_first_ev}
\Lambda_{1} \ = \ \inf_{\varphi \in W_0^{1,\infty}(\Omega)}  \frac{\| \nabla  \varphi\|_{\infty}}{\| \varphi\|_{\infty}},
  \end{equation}
where ${\| \varphi \|_{\infty}}= \operatorname{ess}\sup_{x \in \Omega}| \varphi(x)|.$
It is easy to see (e.g., cf. \cite{juutinen1999eigenvalue}) that the Euclidean distance function $d(x) = \dist(x, \partial \Omega)$ solves the minimization problem \labelcref{eq:var_first_ev}.
However, solutions to the minimization problem \labelcref{eq:var_first_ev} in $W_{0}^{1, \infty}(\Omega)$ are in general not unique, see \cite{bungert2021eigenvalue}. 
\rev 
In fact, unless the domain is a very special stadium-like domain infinitely many minimizers can be constructed.
This non-uniqueness requires a more careful definition of the infinity Laplacian eigenvalue problem and correspondingly numerical methods that are based on Rayleigh quotient minimization (as discussed in the introduction) are not applicable in this context.
\nc 

A first eigenfunction of the infinity Laplacian operator can be obtained through the limit of the $p$-Laplacian equation \labelcref{eq:p-Laplac_ground_state} for $p\rightarrow \infty$. The limit of these equations as $p \rightarrow \infty$ is found to be
\[
\min{\{|  \nabla u|- \Lambda_{1} u,\, -\Delta_{\infty} u \}} \ = \ 0.
\]
All this was proved in \cite{juutinen1999eigenvalue} and we subsume their results in
\begin{theorem}\label{thm:existence_ground_state}  Let $\Omega$ be an open, bounded domain. Then there exists a positive viscosity solution  $ u \in W_{0}^{1,\infty}(\Omega)$ of the problem
\begin{equation}\label{eq:first_eigenfct}
\begin{cases}
\min( |\nabla u| -\Lambda_{1} u,\, -\Delta_{\infty} u )  \ = \ 0   & \quad \text{in }\Omega,\\
u  \ = \ 0   & \quad \text{on }\partial \Omega,
\end{cases}
\end{equation}
where
\[
\Lambda_{1}  \ = \ \Lambda_{1}(\Omega) \ = \ \frac{1}{\max_{x \in \Omega}\dist(x, \partial \Omega)}.
\]
Moreover, any positive solution $u$ of  \labelcref{eq:first_eigenfct} realizes the minimum in \labelcref{eq:var_first_ev}. Such a function $u$ can be constructed as a cluster point for $p \rightarrow \infty $ of a properly normalized sequence of first eigenfunctions of the $p$-Laplacian operator. Furthermore,
\[
\Lambda_{1} \ = \ \lim_{ p \rightarrow \infty }  \lambda_{1}(p)^{\frac{1}{p}},
\]
where $\lambda_{1}(p)$ denotes the first eigenvalue of the $p$-Laplacian operator given by \labelcref{eq:first_p_eigenvalue}.
\end{theorem}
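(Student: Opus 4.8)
The plan is to realize the solution as a limit of the variational $p$-eigenfunctions, exactly as indicated in the statement. Write $R=\max_{x\in\Omega}\dist(x,\partial\Omega)$ for the inradius and let $u_p\in W^{1,p}_0(\Omega)$ be a first eigenfunction solving \cref{eq:p-Laplac_ground_state}; since the first eigenfunction does not change sign we may take $u_p>0$, normalized by $\norm{u_p}_p=1$. I would begin with the elementary identification $\Lambda_1=1/R$ from \cref{eq:var_first_ev}: testing with the distance function $d(x)=\dist(x,\partial\Omega)$, which satisfies $\norm{\nabla d}_\infty=1$ and $\norm{d}_\infty=R$, gives $\Lambda_1\leq 1/R$, while the Lipschitz estimate $|\varphi(x)|\leq\norm{\nabla\varphi}_\infty\,\dist(x,\partial\Omega)$, valid for every $\varphi\in W^{1,\infty}_0(\Omega)$, yields $\norm{\varphi}_\infty\leq R\,\norm{\nabla\varphi}_\infty$ and hence $\Lambda_1\geq 1/R$.

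The next block combines the convergence of eigenvalues and eigenfunctions. Taking $p$-th roots in \cref{eq:first_p_eigenvalue} identifies $\lambda_1(p)^{1/p}$ with the infimum of $\norm{\nabla\varphi}_p/\norm{\varphi}_p$; inserting a fixed Lipschitz test function and using $\norm{\cdot}_p\to\norm{\cdot}_\infty$ on the bounded domain $\Omega$ gives the upper bound $\limsup_p\lambda_1(p)^{1/p}\leq\Lambda_1$. From $\norm{\nabla u_p}_p^p=\lambda_1(p)$ and Hölder's inequality I would then derive, for every fixed $q$ and all $p>q$, the uniform estimate
\[
\norm{\nabla u_p}_q\;\leq\;|\Omega|^{\frac1q-\frac1p}\,\lambda_1(p)^{\frac1p},
\]
whose right-hand side is bounded because of the upper bound just obtained. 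Choosing $q>d$ and invoking the Morrey embedding, Arzel\`a--Ascoli yields a subsequence $u_{p_j}\to u$ uniformly on $\overline\Omega$ with $u=0$ on $\partial\Omega$, $u\geq 0$, and $\norm{u}_\infty=1$; by weak lower semicontinuity and letting $q\to\infty$ one gets $\norm{\nabla u}_\infty\leq\liminf_j\lambda_1(p_j)^{1/p_j}\leq\Lambda_1$, so $u\in W^{1,\infty}_0(\Omega)$. Since $\norm{\nabla u}_\infty/\norm{u}_\infty\geq\Lambda_1$ by \cref{eq:var_first_ev}, all these inequalities are equalities: $u$ realizes the minimum in \cref{eq:var_first_ev} and $\lim_p\lambda_1(p)^{1/p}=\Lambda_1=1/R$.

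The central and most delicate step is to pass to the limit in the eigenvalue equation in the viscosity sense. Here I would rewrite \cref{eq:p-Laplac_ground_state} for $u_p>0$ using the pointwise expansion
\[
-\Delta_p u\;=\;-|\nabla u|^{p-2}\Delta u-(p-2)|\nabla u|^{p-4}\Delta_\infty u,
\]
normalize by dividing through by powers of $\lambda_1(p)^{1/p}$ and $|\nabla u_p|$ so that the term weighted by $(p-2)$, namely $-\Delta_\infty u$, dominates as $p\to\infty$, and test against $C^2$ functions touching $u_{p_j}$ from above or below. The stability of viscosity solutions under uniform convergence, combined with $\lambda_1(p)^{1/p}\to\Lambda_1$, should then collapse the sub- and supersolution inequalities in the limit to $\min(|\nabla u|-\Lambda_1 u,-\Delta_\infty u)=0$. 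I expect this to be the main obstacle: one must track the competing exponents as $p\to\infty$, treat separately the degenerate points where the test gradient vanishes, and check that the dominant balance genuinely reproduces the minimum structure rather than a single one-sided inequality. Finally, strict positivity of $u$ in $\Omega$ follows from $\norm{u}_\infty=1$ and the strong minimum principle for the $\infty$-superharmonic function $u$ (the supersolution part yields $-\Delta_\infty u\geq 0$); the assertion that \emph{every} positive solution, not only the constructed one, realizes the minimum in \cref{eq:var_first_ev} would then be obtained from the one-sided gradient estimates encoded in the sub- and supersolution inequalities together with the comparison principles for the $\infty$-Laplacian, following \cite{juutinen1999eigenvalue}.
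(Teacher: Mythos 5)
This theorem is not proved in the paper at all: the authors state explicitly that ``all this was proved in \cite{juutinen1999eigenvalue}'' and merely quote the result, so there is no in-paper proof to compare against. The relevant benchmark is therefore the original argument of Juutinen, Lindqvist, and Saksman, and your plan follows exactly their strategy: normalize the first $p$-eigenfunctions, prove $\Lambda_1=1/R$ and $\limsup_p\lambda_1(p)^{1/p}\leq\Lambda_1$ by testing \cref{eq:var_first_ev} and \cref{eq:first_p_eigenvalue} with the distance function, extract a uniformly convergent subsequence via the H\"older--Morrey--Arzel\`a--Ascoli chain, and then pass to the limit in the equation in the viscosity sense. The steps you actually carry out (the identification of $\Lambda_1$, the eigenvalue upper bound, the compactness estimate $\norm{\nabla u_p}_q\leq|\Omega|^{1/q-1/p}\lambda_1(p)^{1/p}$, and the squeeze showing the limit realizes the minimum in \cref{eq:var_first_ev}) are correct.

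However, as a proof the proposal has a genuine gap precisely where the theorem's real content lies. The passage to the limit in \cref{eq:p-Laplac_ground_state} is only described, not executed: you write down the expansion of $-\Delta_p$, say that one should ``normalize by dividing through by powers'' so that $-\Delta_\infty$ dominates, and then state that the limit ``should collapse'' to $\min(|\nabla u|-\Lambda_1 u,\,-\Delta_\infty u)=0$, explicitly flagging this as the main obstacle. But this is the step where the minimum structure of \cref{eq:first_eigenfct} is created --- one must show that at a touching point either $|\nabla\varphi|\geq\Lambda_1\varphi$ or the rescaled ratio $\bigl(\lambda_1(p)^{1/p}\varphi/|\nabla\varphi|\bigr)^{p}$ blows up and forces $-\Delta_\infty\varphi\geq 0$ (and the reverse inequalities for subsolutions), with separate care at points where the test gradient vanishes. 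Without this dominant-balance argument the limit function is not known to solve the equation at all, and in turn your positivity argument (strong minimum principle for $\infty$-superharmonic functions) has no hypothesis to stand on, since it presupposes the supersolution property you have not yet established. Likewise, the final assertion that \emph{every} positive solution of \cref{eq:first_eigenfct} realizes the minimum in \cref{eq:var_first_ev} is deferred wholesale to \cite{juutinen1999eigenvalue} rather than argued; this requires its own comparison-with-cones/Harnack argument and does not follow formally from the sub- and supersolution inequalities. In short: the scaffolding is the right one, but the two load-bearing steps are named rather than proved.
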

An important distinction has to be made between those solutions of \labelcref{eq:first_eigenfct} which are a limit of $p$-Laplacian ground states and those which are not.
\begin{definition}[Variational ground states]\label{def:variational_gs}
A solution of \labelcref{eq:first_eigenfct} which is a cluster point of normalized solutions of~\labelcref{eq:p-Laplac_ground_state} is called \emph{variational} ground state. 
All other solutions are called \emph{non-variational}.
\end{definition}
\begin{remark}[(Non-)uniqueness]\label{rem:nonuniqueness}
As most eigenvalue problems, also \labelcref{eq:first_eigenfct} is invariant under scalar multiplication, meaning that if $u\in W^{1,\infty}_0(\Omega))$ is a solution then so is $cu$ for any $c\in\R$. 
However, not even normalized solutions to \labelcref{eq:first_eigenfct} are unique:
In \cite{hynd2013nonuniqueness} Hynd, Smart, and Yu have shown the non-uniqueness of infinity ground states for a dumbbell domain. 
However, the ground state which was constructed there is non-variational.
Yu in~\cite{yu2007some} proved that on stadium-like domains (as for instance the ball) ground states are unique up to scaling and coincide with the distance function of the domain.
Whether uniqueness holds for general convex domains or variational ground states, remains an open problem.
\end{remark}
\cref{thm:existence_ground_state} states that the first eigenvalue can be interpreted geometrically, i.e., $\Lambda_{1}$ is the reciprocal of the radius of the largest ball that fits inside the domain~$\Omega$.
In general, $\Lambda_1$ cannot be detected in regions where the solution is smooth, i.e., the term $|\nabla u (x_0)| - \Lambda_1 u(x_0)$ in \labelcref{eq:first_eigenfct} is not active. According to \cite{yu2007some} if $u \in C^{1}(\Omega)$ in $x_{0} \in \Omega$ then
\[
\Lambda u(x_{0}) < |\nabla u (x_{0})| \quad \text { and } \quad \Delta_{\infty} u(x_{0}) \ = \ 0.
\]
It is known that also the second eigenvalue has a geometric characterization. According to \cite{juutinen2005higher} the second eigenvalue of the infinity Laplacian is given by
\begin{equation}\label{eq:second_eigenvalue}
\Lambda_{2} \ = \ \frac{1}{r_{2}}.
\end{equation}
where $r_{2}= \sup \{ r > 0 : \text{there are  disjoint balls } B_1, B_2 \subset \Omega \text{ with radius } r\}$. 
Furthermore, one has 
\begin{theorem}[Theorem 4.1 in \cite{juutinen2005higher}]
Let $\lambda_{2}(p)$ be the second $p$-eigenvalue in $\Omega.$ Then it holds that
\[
\Lambda_2 \ = \ \underset { p \rightarrow \infty }  {\lim}  \lambda_2(p)^{\frac{1}{p}}
\]
and $\Lambda_{2} \in >0$ is the second eigenvalue of the infinity Laplacian.
\end{theorem}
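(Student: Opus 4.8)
The plan is to prove the identity $\lim_{p\to\infty}\lambda_2(p)^{1/p}=\Lambda_2=1/r_2$ by matching an upper bound on the $\limsup$ with a lower bound on the $\liminf$, and then to identify the limit as a genuine second eigenvalue of \cref{eq:higher_efs}. The central tool I would invoke is the variational min-max characterization of the second $p$-eigenvalue,
\[
\lambda_2(p)=\inf_{A}\ \sup_{\varphi\in A} R_p(\varphi),\qquad R_p(\varphi):=\frac{\|\nabla\varphi\|_p^p}{\|\varphi\|_p^p},
\]
where $A$ ranges over the symmetric ($A=-A$) compact subsets of the unit sphere in $W_0^{1,p}(\Omega)$ of Krasnoselskii genus at least two, together with \cref{thm:existence_ground_state} applied to balls.

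First I would establish the upper bound $\limsup_{p\to\infty}\lambda_2(p)^{1/p}\le 1/r_2$. Fix $\varepsilon>0$ and choose two disjoint balls $B_1,B_2\subset\Omega$ of a common radius $r>r_2-\varepsilon$. Denoting by $\phi_i$ the first $p$-eigenfunction of $B_i$, extended by zero to $\Omega$, the functions $\phi_1$ and $\phi_2$ have disjoint supports, so any nontrivial combination obeys $R_p(a\phi_1+b\phi_2)\le\max\bigl(R_p(\phi_1),R_p(\phi_2)\bigr)=\lambda_1(p;B_r)$, since the Rayleigh quotient of a disjointly supported sum is a weighted average of the individual quotients. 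The intersection of $\mathrm{span}\{\phi_1,\phi_2\}$ with the unit sphere is an admissible set of genus two, hence $\lambda_2(p;\Omega)\le\lambda_1(p;B_r)$. Because the inradius of $B_r$ equals $r$, \cref{thm:existence_ground_state} gives $\lambda_1(p;B_r)^{1/p}\to 1/r$, so $\limsup_p\lambda_2(p)^{1/p}\le 1/r\le 1/(r_2-\varepsilon)$; letting $\varepsilon\to0$ proves the bound.

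For the matching lower bound I would argue by compactness. Normalizing the second $p$-eigenfunctions by $\|u_p\|_\infty=1$ and deriving uniform Lipschitz estimates (as in the first-eigenvalue analysis of \cite{juutinen1999eigenvalue}), Arzel\`a--Ascoli yields a subsequence with $u_{p_k}\to u_\infty$ uniformly and $\lambda_2(p_k)^{1/p_k}\to\Lambda$. Stability of viscosity solutions under this limit shows that $u_\infty$ is a nontrivial, sign-changing solution of \cref{eq:higher_efs} with eigenvalue $\Lambda$. On the nodal domain $\{u_\infty>0\}$ the equation reduces to $\min(|\nabla u_\infty|-\Lambda u_\infty,-\Delta_\infty u_\infty)=0$, and a barrier comparison with the distance function forces $\{u_\infty>0\}$ to contain a ball of radius $1/\Lambda$; the same holds for $\{u_\infty<0\}$. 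These two balls lie in disjoint nodal domains, so $1/\Lambda\le r_2$, i.e.\ $\Lambda\ge 1/r_2$. Together with the upper bound this shows that every subsequential limit of $\lambda_2(p)^{1/p}$ equals $1/r_2$, so the full limit exists and equals $\Lambda_2$.

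Finally, the limit $u_\infty$ certifies that $\Lambda_2$ is an eigenvalue of the infinity Laplacian, and the nodal-domain argument above shows more: any sign-changing eigenfunction requires two disjoint balls inside $\Omega$ and therefore an eigenvalue of at least $1/r_2=\Lambda_2$. Since $\Lambda_1$ admits a one-signed eigenfunction and $\Lambda_2\ge\Lambda_1$, this identifies $\Lambda_2$ as the smallest eigenvalue carrying a sign-changing eigenfunction, i.e.\ the second eigenvalue. I expect the principal difficulty to lie in the lower-bound step: securing enough uniform regularity to pass to the limit, and---most delicately---ruling out degeneration of $u_\infty$, that is, proving it is nontrivial and genuinely changes sign rather than collapsing onto a first eigenfunction, together with the barrier estimate that inscribes a ball of radius $1/\Lambda$ in each nodal domain.
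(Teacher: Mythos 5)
First, note that the paper itself contains no proof of this statement: it is quoted verbatim as Theorem 4.1 of \cite{juutinen2005higher}, so your proposal can only be measured against the argument in that reference, whose two-sided strategy you have essentially reconstructed. Your upper bound is sound: two disjoint balls of common radius $r>r_2-\varepsilon$, first $p$-eigenfunctions $\phi_1,\phi_2$ with disjoint supports, the mediant inequality $R_p(a\phi_1+b\phi_2)\le\max(R_p(\phi_1),R_p(\phi_2))$, and $\lambda_1(p;B_r)^{1/p}\to 1/r$ from \cref{thm:existence_ground_state}. (You should flag that identifying the genus-two min-max value with $\lambda_2(p)$ as defined here---the smallest eigenvalue above $\lambda_1(p)$---is itself a nontrivial known theorem of Anane--Tsouli type.) The architecture of your lower bound---compactness of normalized second $p$-eigenfunctions, viscosity passage to the limit, and the Juutinen--Lindqvist--Manfredi result \cite{juutinen1999eigenvalue} that a positive solution of $\min(|\nabla u|-\Lambda u,-\Delta_\infty u)=0$ on a domain $D$ with zero boundary values forces $\Lambda=1/r(D)$, applied on each nodal domain---is also the right one.

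The genuine gap is the step you flag yourself: you assert that stability of viscosity solutions shows $u_\infty$ is a \emph{sign-changing} solution, but stability only yields that $u_\infty$ solves the limit equation. Under the normalization $\|u_p\|_\infty=1$ nothing prevents $\|u_p^-\|_\infty\to 0$, in which case $u_\infty\ge 0$, there is only one nodal region, and your argument delivers only $\Lambda\ge\Lambda_1$ rather than $\Lambda\ge 1/r_2$---so the lower bound, which is the heart of the theorem, collapses. The missing idea that repairs this is to normalize the two parts \emph{separately}: set $v_p=u_p^+/\|u_p^+\|_\infty$ and $w_p=u_p^-/\|u_p^-\|_\infty$ (both well defined since second $p$-eigenfunctions change sign). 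Testing the weak equation with $u_p^\pm$ gives $\|\nabla u_p^\pm\|_p^p=\lambda_2(p)\|u_p^\pm\|_p^p$, so $v_p$ and $w_p$ enjoy the same uniform $W^{1,q}$ bounds as $u_p$; subsequences converge uniformly to limits $v_\infty,w_\infty$ with $\|v_\infty\|_\infty=\|w_\infty\|_\infty=1$ and $v_\infty w_\infty\equiv 0$ (since $v_pw_p\equiv 0$ for every $p$). Near any point where $v_\infty>0$, uniform convergence shows that eventually $u_p>0$, so the viscosity passage applies locally and $v_\infty$ is a positive solution of the ground-state equation, with eigenvalue $\Lambda=\lim_k\lambda_2(p_k)^{1/p_k}$ and zero boundary values, on each component of $\{v_\infty>0\}$; likewise for $w_\infty$. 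The theorem of \cite{juutinen1999eigenvalue} then pins the inradius of each such component to $1/\Lambda$, producing two disjoint balls of radius $1/\Lambda$ and hence $\Lambda\ge 1/r_2$, with no non-degeneration hypothesis required. Be aware that this device proves the eigenvalue limit but does not by itself produce a sign-changing eigenfunction of \cref{eq:higher_efs} at level $\Lambda_2$ (one cannot simply glue $v_\infty$ and $-w_\infty$ into a viscosity solution across the interface); certifying that $\Lambda_2$ \emph{is} an eigenvalue, as the statement claims, requires the additional existence argument carried out in \cite{juutinen2005higher}.
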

According to \cite{juutinen2005higher} higher eigenfunctions of the infinity Laplacian operator can be obtained as a viscosity solution $u\in W^{1,\infty}_0(\Omega)$ of the equation $F_\Lambda(u,\nabla u, D^2u)=0$, where $F_\Lambda:\R\times\R^n\times\S^{n}\to\R$ is given by
\begin{equation}
\label{eq:definition_F}
 F_{\Lambda}(u , p, M) \ = \
\begin{cases}
\min(|p|- \Lambda u,  -p^T M p ) & \quad \text{ for }   \quad u>0 \ , \\
-p^T M p & \quad \text{ for } \quad u=0 \ ,\\
\max(- |p|- \Lambda u,  - p^T M p ) & \quad \text{ for } \quad u < 0 \ ,
\end{cases}
\end{equation}
and $\Lambda$ denotes the corresponding eigenvalue.
Since the sign of the solution is unknown a-priori, this is a free boundary problem and hence hard to solve numerically.
This is our motivation for reformulating this eigenvalue problem in \cref{s:formulation}.

\begin{remark}
The equation of the first eigenfunction in \labelcref{eq:first_eigenfct} can also be expressed through \labelcref{eq:definition_F} since the first eigenfunction does not change sign.
\end{remark}

\section{Reformulation of the infinity Laplacian eigenvalue problem}
\label{s:formulation}
In the following, we present an equivalent formulation of the higher infinity Laplacian eigenvalue problem, which allows us to avoid the distinction of cases in~\labelcref{eq:definition_F}.
To this end, we introduce the function $H_\Lambda:\R\times\R\times\S^n\to\R$, defined as
\begin{equation}\label{eq:defintion_H}
       H_\Lambda(u,p,M)=\min(|p|-\Lambda u,  -p^T Mp)+ \max(-|p|-\Lambda u,  -p^T M p) + p^T M p,
\end{equation}
and consider the associated problem of finding a viscosity solution to the equation $H_\Lambda(u,\nabla u,D^2u)=0$.
The following is our main theorem and states that the formulations through $F_\Lambda$ and $H_\Lambda$ are equivalent.
\begin{theorem}[Equivalent formulation of the eigenvalue problem]\label{thm:equivalence}
It holds that $u\in W^{1,\infty}_0(\Omega)$ is a viscosity solution of $F_\Lambda(u,\nabla u,D^2u) = 0$ if and only if it is a viscosity solution of $H_\Lambda(u,\nabla u,D^2u)  = 0$, where $F_\Lambda$ and $H_\Lambda$ are given by \labelcref{eq:definition_F} and \labelcref{eq:defintion_H}, respectively.
\end{theorem}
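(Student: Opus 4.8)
The plan is to reduce the statement to a purely pointwise comparison of the two nonlinearities and to exploit that the viscosity sub- and supersolution conditions see only the \emph{sign} of the operator at a test jet. Fix $x\in\Omega$ and $\phi\in C^2(\Omega)$, write $u=u(x)$, $p=\nabla\phi(x)$, $M=D^2\phi(x)$, and abbreviate
\[
a:=|p|-\Lambda u,\qquad b:=-|p|-\Lambda u,\qquad c:=-p^T M p .
\]
Since $a-b=2|p|\geq 0$ we always have $a\geq b$. I claim the theorem follows at once from the pointwise identity $\operatorname{sign}F_\Lambda=\operatorname{sign}H_\Lambda$ on all of $\R\times\R^n\times\S^n$, i.e.\ from $F_\Lambda\le 0\Leftrightarrow H_\Lambda\le 0$ and $F_\Lambda\ge 0\Leftrightarrow H_\Lambda\ge 0$. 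Indeed, by \cref{visc_definition} being a subsolution (resp.\ supersolution) of $F_\Lambda=0$ means precisely that $F_\Lambda$ evaluated at $(u(x),\nabla\phi,D^2\phi)$ is $\le 0$ (resp.\ $\ge 0$) at every local maximum (resp.\ minimum) of $u-\phi$; the very same test pairs $(x,\phi)$ enter the definition for $H_\Lambda$, so agreement of signs transfers the sub-, super-, and hence the solution property between the two formulations, with no comparison principle or regularity input required.

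The first key step is the algebraic identity
\[
H_\Lambda(u,p,M)=\min(a,c)+\max(b,c)-c=\operatorname{med}(a,b,c),
\]
the median of the three numbers, which holds because $a\geq b$. One checks it by splitting into the orderings $c\le b\le a$, $b\le c\le a$ and $b\le a\le c$, in which $\min(a,c)+\max(b,c)-c$ evaluates to $b$, $c$ and $a$ respectively, i.e.\ always the middle value.

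The second step is a case distinction on the sign of $u$. If $u>0$, then $\Lambda>0$ forces $b<0$, and comparing $F_\Lambda=\min(a,c)$ with $\operatorname{med}(a,b,c)$ gives either $F_\Lambda=H_\Lambda$ (when $c\ge b$) or $F_\Lambda=c<b=H_\Lambda<0$ (when $c<b$); either way the signs coincide. The case $u<0$ is symmetric: there $a>0$, and $F_\Lambda=\max(b,c)$ equals $H_\Lambda$ unless $c>a$, in which case $F_\Lambda=c>a=H_\Lambda>0$. For $u=0$ one has $a=|p|\ge 0\ge -|p|=b$ and $F_\Lambda=c$, and a direct check shows $\operatorname{med}(a,b,c)\le 0\Leftrightarrow c\le 0$ and $\operatorname{med}(a,b,c)\ge 0\Leftrightarrow c\ge 0$, so again $\operatorname{sign}F_\Lambda=\operatorname{sign}H_\Lambda$.

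The delicate point, and the one I expect to need the most care, is exactly the borderline $u=0$, where $F_\Lambda$ is \emph{discontinuous} while $H_\Lambda$ is continuous, together with the apparently dangerous configurations $c>a=0$ or $c<b=0$ that would create a sign mismatch. These are excluded by the structural constraint $c=-p^T M p$: whenever $p=0$ one automatically has $p^T M p=0$, hence $a=0$ (equivalently $p=0$) already forces $c=0$, so the offending cases never arise. Once the pointwise sign identity is secured on the whole space, the equivalence of the $F_\Lambda$- and $H_\Lambda$-formulations is immediate from the reduction in the first paragraph.
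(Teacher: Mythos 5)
Your proposal is correct, but it is organized quite differently from the paper's proof, and in a way that buys real economy. The paper argues directly at the level of viscosity solutions: it fixes a sign case for $u$, takes a test function touching from above, and verifies the required inequality for the other operator through subcases on the sign of $-\Delta_\infty\varphi$, doing this separately for each implication direction and leaving the supersolution checks as ``analogous.'' You instead prove a strictly stronger pointwise statement --- that $F_\Lambda$ and $H_\Lambda$ have the same sign at \emph{every} argument $(u,p,M)$ --- and observe that the viscosity sub- and supersolution conditions only ever interrogate the sign of the operator at test jets, so the equivalence of all four properties (sub/super, both directions) follows in one stroke with no duplicated case work. The engine of your argument, the identity
\[
H_\Lambda(u,p,M)=\min(a,c)+\max(b,c)-c=\operatorname{med}(a,b,c),\qquad a=|p|-\Lambda u,\; b=-|p|-\Lambda u,\; c=-p^TMp,
\]
valid because $a\geq b$, is a structural insight absent from the paper and makes the subsequent sign comparison transparent: for $u>0$ one has $b<0$ and $H_\Lambda$ is either $\min(a,c)=F_\Lambda$ or a negative number sandwiched above $F_\Lambda=c$; symmetrically for $u<0$. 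You also correctly isolated the only genuinely delicate configuration, $u=0$ with $p=0$, where a naive sign comparison of a median against $c$ could fail; your resolution --- that $p=0$ forces $c=-p^TMp=0$, so the dangerous cases $c>a=0$ and $c<b=0$ never occur --- is exactly the constraint the paper invokes in its Case 2.3 (``$|\nabla\varphi(x_0)|=0$ and hence also $-\Delta_\infty\varphi(x_0)=0$''), but you make its role explicit. The trade-off is that the paper's proof, while more repetitive, stays entirely within the standard test-function formalism, whereas yours requires the (easy but worth stating carefully) meta-observation that sign-equivalent operators have identical viscosity sub- and supersolutions.
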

\begin{proof}
Assume $F_{\Lambda}(u,\nabla u,D^2u)=0$ in the viscosity sense. 
We need to make a case distinction on the sign of the solution~$u$ \rev to prove that $u$ is a subsolution of $H_\lambda(u,\nabla u,D^2u)\leq 0$.
Showing that it is also a supersolution works in an analogous way.\nc
\paragraph{Case 1.1}
Let $\varphi$ be a $C^2$ function touching $u$ from above in $x$ such that $u(x)>0$.
Then we have
\(
 \min(|\nabla \varphi(x) |- \Lambda \varphi(x),  - \Delta_{\infty} \varphi(x) ) \leq 0.
\)
If $-\Delta_\infty\varphi(x)\geq 0$ then using $-\Lambda\varphi(x)=-\Lambda u(x)<0$ we infer
$$\max(-|\nabla\varphi(x)|-\Lambda\varphi(x),-\Delta_\infty\varphi(x))=-\Delta_\infty\varphi(x)$$
and hence
\begin{equation*}
    H_\Lambda(\varphi(x),\nabla \varphi(x),D^2 \varphi(x))\leq 0 - \Delta_\infty\varphi(x)+\Delta_\infty\varphi(x)=0.
\end{equation*}
If, however, $-\Delta_\infty\varphi(x)<0$  we have to investigate two subcases.
Let us first assume that $|\nabla\varphi(x)|-\Lambda\varphi(x)\leq-\Delta_\infty\varphi(x)$.
Then we get that
$$\max(-|\nabla\varphi(x)|-\Lambda\varphi(x),-\Delta_\infty\varphi(x))=-\Delta_\infty\varphi(x)$$
and hence
\begin{equation*}
    H_\Lambda(\varphi(x),\nabla \varphi(x),D^2 \varphi(x))\leq 0 - \Delta_\infty\varphi(x)+\Delta_\infty\varphi(x)=0.
\end{equation*}
If we assume that $|\nabla\varphi(x)|-\Lambda\varphi(x)>-\Delta_\infty\varphi(x)$ we obtain
$$\min(|\nabla\varphi(x)|-\Lambda\varphi(x),-\Delta_\infty\varphi(x))=-\Delta_\infty\varphi(x).$$
Furthermore, from $-\Delta_\infty\varphi(x)\leq 0$ it follows 
$$\max(-|\nabla\varphi(x)|-\Lambda\varphi(x),-\Delta_\infty\varphi(x))\leq 0.$$
Combining these two we infer
\begin{equation*}
    H_\Lambda(\varphi(x),\nabla \varphi(x),D^2 \varphi(x))\leq -\Delta_\infty\varphi(x) +0 +\Delta_\infty\varphi(x)=0.
\end{equation*}
Hence, we have shown that $u$ is a subsolution of $H_\Lambda(u,\nabla u,D^2 u)\leq 0$.
\paragraph{Case 1.2}
\rev
Let now $\varphi$ be a $C^2$ function touching $u$ from above in $x$ such that $u(x)<0$.
Then by assumption we have $\max(-\abs{\nabla\varphi(x)}-\Lambda\varphi(x),-\Delta_\infty\varphi(x))\leq 0$.

If $\min(\abs{\nabla\varphi(x)}-\Lambda\varphi(x),-\Delta_\infty\varphi(x))=-\Delta_\infty\varphi(x)$ then this immediately implies the subsolution property $H_\Lambda(\varphi(x),\nabla\varphi(x),D^2\varphi(x))\leq 0$.
If, however, $\abs{\nabla\varphi(x)}-\Lambda\varphi(x)<-\Delta_\infty\varphi(x)$, then using that $\varphi(x)=u(x)<0$ we obtain $-\Delta_\infty\varphi(x)>0$ which contradicts the hypothesis that $\max(-\abs{\nabla\varphi(x)}-\Lambda\varphi(x),-\Delta_\infty\varphi(x))\leq 0$.
\nc 
\paragraph{Case 1.3}
Let $\varphi$ be a $C^2$ function touching $u$ from above in $x$ such that $u(x)=0$ meaning $\varphi(x) = 0$.
Then we have $-\Delta_{\infty} \varphi(x) \leq 0$ which implies \rev the subsolution property \nc
\begin{equation*}
\begin{split}
 H_{\Lambda} (&\varphi(x) ,\nabla \varphi(x) , D^{2} \varphi(x)) \\
     = \ &\min(|\nabla \varphi(x) |,  -\Delta_{\infty} \varphi(x))+ \max(-|\nabla \varphi(x) |,  -\Delta_{\infty} \varphi(x))+ \Delta_{\infty} \varphi(x) \\
     = \ &-\Delta_{\infty} \varphi(x)  + \max(-|\nabla \varphi(x) |,  -\Delta_{\infty} \varphi(x))+ \Delta_{\infty} \varphi(x) \\
    = \ & \max(-|\nabla \varphi(x) |,  -\Delta_{\infty} \varphi(x)) \leq 0.
     \end{split}
\end{equation*}
Now we prove the converse statement and assume that $H_\Lambda(u,\nabla u,D^2 u)=0$ in the viscosity sense. 
Again, we consider the different possible signs of $u$ \rev and shall prove that $u$ is a subsolution of $F_\Lambda(u,\nabla u,D^2 u) \leq 0$.
Analogously, one shows that $u$ is a supersolution, as well.
\nc 

\paragraph{Case 2.1}
Let $\varphi$ be a $C^2$ function touching $u$ from above in $x$ such that $u(x)>0$.
Then it holds that $H_\Lambda(\varphi(x),\nabla\varphi(x),D^2\varphi(x))\leq 0$ and we must show that
\[
F_\Lambda(\varphi(x),\nabla\varphi(x),D^2\varphi(x))\leq 0.
\]
If $-\Delta_{\infty} \varphi(x)\geq -|\nabla \varphi(x)|-\Lambda \varphi(x)$ we conclude
$$F_\Lambda(\varphi(x),\nabla\varphi(x),D^2\varphi(x))=H_\Lambda(\varphi(x),\nabla\varphi(x),D^2\varphi(x))\leq 0.$$
On the other hand, if 
\begin{align*}
    -\Delta_{\infty} \varphi(x)\leq -|\nabla \varphi(x)|-\Lambda \varphi(x)\leq 0    
\end{align*}
then it holds
\begin{align*}
F_\Lambda(\varphi(x),\nabla\varphi(x),D^2\varphi(x))=\min(|\nabla\varphi(x)|-\Lambda\varphi(x),-\Delta_\infty\varphi(x))\leq-\Delta_\infty\varphi(x)\leq 0.
\end{align*}
This shows that $u$ is a subsolution of $F_\Lambda(u,\nabla u,D^2u)=0$.
\paragraph{Case 2.2}
\rev
Let $\varphi$ be a $C^2$ function touching $u$ from above in $x$ such that $u(x)<0$.
Then, if $\min(\abs{\nabla\varphi(x)}-\Lambda\varphi(x),-\Delta_\infty\varphi(x))=-\Delta_\infty\varphi(x)$, we obtain the subsolution property
\begin{align*}
    F_\Lambda(\varphi(x),\nabla\varphi(x),D^2\varphi(x))=H_\Lambda(\varphi(x),\nabla\varphi(x),D^2\varphi(x))\leq 0.
\end{align*}
If, however, $-\Delta_\infty\varphi(x)>\abs{\nabla\varphi(x)}-\Lambda\varphi(x)$, then using $\phi(x)=u(x)<0$ we obtain $-\Delta_\infty\varphi(x)>0$.
In this case we get
\begin{align*}
    H_\Lambda(\varphi(x),\nabla\varphi(x),D^2\varphi(x)) = \max(-\abs{\nabla\varphi(x)}-\Lambda\varphi(x),-\Delta_\infty\varphi(x))
    \geq 
    -\Delta_\infty\varphi(x)>0
\end{align*}
which is a contradiction to $H_\Lambda(\varphi(x),\nabla\varphi(x),D^2\varphi(x))\leq 0$.
\nc 
\paragraph{Case 2.3}
Let $\varphi$ be a $C^2$ function touching $u$ from above in $x$ such that $u(x)=0$ meaning $\varphi(x) = 0$.
Then it holds
$$\min(|\nabla\varphi(x_0)|,-\Delta_\infty\varphi(x_0))+\max(-|\nabla\varphi(x_0)|,-\Delta_\infty\varphi(x_0))+\Delta_\infty\varphi(x_0)\leq 0.$$
If $-\Delta_\infty\varphi(x_0)\leq 0$ we are done since this implies that $u$ is a viscosity subsolution of $-\Delta_\infty u=0$. 
If $-\Delta_\infty\varphi(x_0)\geq 0$ we infer that 
$$\max(-|\nabla\varphi(x_0|,-\Delta_\infty\varphi(x_0))=-\Delta_\infty\varphi(x_0)$$
and hence from above we see that
$$\min(|\nabla\varphi(x_0)|,-\Delta_\infty\varphi(x_0))\leq 0.$$ 
This implies that $|\nabla\varphi(x_0)|=0$ and hence also $-\Delta_\infty\varphi(x_0)=0$. Thus, in both cases $-\Delta_\infty\varphi(x_0)\leq 0$ such that $u$ is a viscosity subsolution. 
\end{proof}
For completeness we also prove that the equation $H_\Lambda(u,\nabla u,D^2u)=0$ is degenerate elliptic.
\begin{proposition}[Degenerate ellipticity]\label{prop:deg_elliptic_H}
Function $H_\Lambda$ in \labelcref{eq:defintion_H} is degenerate elliptic as defined in \labelcref{eq:degen_elliptic}.
\end{proposition}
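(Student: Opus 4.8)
The plan is to verify the defining inequality \cref{eq:degen_elliptic} directly, exploiting the structure of $H_\Lambda$ as a combination of a $\min$, a $\max$, and a linear term in the matrix argument. First I would fix $u\in\R$, $p\in\R^n$, and two matrices with $N\leq M$, and record the single consequence of this ordering that is actually needed: since $M-N$ is positive semi-definite, we have $p^T M p\geq p^T N p$, equivalently $-p^T M p\leq -p^T N p$. Thus the only quantity through which $M$ and $N$ enter $H_\Lambda$ is the scalar $-p^T M p$ (resp.\ $-p^T N p$), and passing from $N$ to the larger matrix $M$ \emph{decreases} this scalar.

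Next I would abbreviate $s:=|p|-\Lambda u$ and $t:=-|p|-\Lambda u$, noting that $t\leq s$ holds unconditionally because $-|p|\leq|p|$. In this notation $H_\Lambda(u,p,M)=\min(s,x)+\max(t,x)-x$ with $x=-p^T M p$, so the whole question reduces to monotonicity of the scalar function
\[
g(x):=\min(s,x)+\max(t,x)-x,\qquad t\leq s,
\]
in the variable $x$. A short case distinction ($x\leq t$, then $t\leq x\leq s$, then $x\geq s$) shows that $g(x)$ equals $t$, then $x$, then $s$; that is, $g$ is exactly the clamping (median) function $g(x)=\min(\max(t,x),s)$, which is continuous and non-decreasing in $x$.

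Finally I would assemble these observations. Since $-p^T M p\leq -p^T N p$ and $g$ is non-decreasing, it follows that
\[
H_\Lambda(u,p,M)=g(-p^T M p)\leq g(-p^T N p)=H_\Lambda(u,p,N),
\]
which is precisely the degenerate ellipticity condition \cref{eq:degen_elliptic}. I do not anticipate a serious obstacle: the content is entirely in the elementary algebraic simplification that rewrites $\min(s,x)+\max(t,x)-x$ as a clamp of $x$ to the interval $[t,s]$. The one point that demands care is the sign bookkeeping — the matrix order $N\leq M$ makes the scalar argument $-p^T M p$ smaller, so one must confirm that $g$ is non-decreasing (rather than non-increasing) in order for the final inequality to point in the direction required by \cref{eq:degen_elliptic}.
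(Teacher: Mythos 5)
Your proof is correct, and it takes a genuinely different and slicker route than the paper. The paper proves \cref{prop:deg_elliptic_H} by enumerating which argument achieves the $\min$ and which achieves the $\max$ for $M$, and then, within each such case, re-enumerating the possibilities for $N$ (Cases 1, 2, 2.1, 2.2.1, 2.2.2), leaving the ``complementary cases'' to the reader. You instead observe that $H_\Lambda$ depends on the matrix argument only through the scalar $x=-p^TMp$, and that with $s=|p|-\Lambda u$, $t=-|p|-\Lambda u$ (so $t\leq s$) the function
\[
g(x)=\min(s,x)+\max(t,x)-x=\min\bigl(\max(t,x),s\bigr)
\]
is exactly the clamp of $x$ to the interval $[t,s]$, hence non-decreasing; combined with $-p^TMp\leq -p^TNp$ for $N\leq M$, this gives $H_\Lambda(u,p,M)\leq H_\Lambda(u,p,N)$ in one line, which is \cref{eq:degen_elliptic}. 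Your three-way case split ($x\leq t$, $t\leq x\leq s$, $x\geq s$) plays the role of the paper's many cases but is exhaustive and symmetric, so nothing is deferred to ``analogous'' arguments. Beyond being shorter and complete, your reduction also exposes a structural fact the paper's proof hides: the reformulation $H_\Lambda$ in \cref{eq:defintion_H} is precisely the median of the two eikonal-type terms and the infinity Laplacian term, which makes both the ellipticity and the equivalence with the case-defined $F_\Lambda$ more transparent. The paper's case-by-case computation buys only self-containedness at the level of individual inequalities; it offers no insight your argument lacks.
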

\begin{proof}
\rev 
To prove degenerate ellipticity it suffices to argue that for $a,b\in\R$ with $a\geq b$, the function 
\begin{align*}
    t\mapsto f(t)
    := \min(a,-t)+\max(b,-t) + t
\end{align*}
is non-increasing.
To see this, a simple case distinction shows that one can express $f$ as
\begin{align*}
    f(t)
    =
    \begin{cases}
        -a,\quad&t<-a,\\
        -t,\quad&-a\leq t\leq -b,\\
        -b,\quad&t>-b,
    \end{cases}
\end{align*}
which is obviously a non-increasing function.
\nc 
\end{proof}

\section{Numerical method}
\label{s:numerics}

In this section we propose methods to approximate eigenfunctions of the infinity Laplacian. 
First, we recall the concept of monotone schemes in \cref{ss:monotone_schemes} as these are needed to construct numerical schemes which approximate eigenfunctions of the infinity Laplacian on general unstructured grids. 
Then, we sketch the approximation of the distance function and the first infinity Laplacian eigenvalue in \cref{ss:eigenvalues}. 
Finally, our main contribution in this section is that we define consistent monotone schemes to approximate ground states and higher eigenfunctions of the infinity Laplacian in \cref{ss:eigenfunction_first} and \cref{ss:eigenfunction_second}, respectively.

\subsection{Monotone schemes and convergence without comparison principle}
\label{ss:monotone_schemes}

In order to numerically compute approximate viscosity solutions to the abstract degenerate elliptic equation \labelcref{eq:elliptic_general}, which in particular allows us to solve the infinity eigenvalue problems, we make use of monotone schemes and follow the description by Oberman in \cite{oberman2006convergent}. 
We first define an unstructured grid on the domain $\overline\Omega$ as a graph consisting of a set of vertices $V = V_\mathrm{inn}\cup V_\mathrm{bdry}$ where $V_\mathrm{inn}:=\{x_i \in \Omega\st i = 1,\dots,M\}$ for $M \in \mathbb{N}$ are inner vertices and $V_\mathrm{bdry}:=\{x_i \in \partial\Omega\st i = M+1,\dots,M+N\}$ for $N\in \mathbb{N}$ are boundary vertices.
The number of total vertices is abbreviated by $K:=M+N\in\N$.
To each point $x_i\in V$ we associate a list of global neighbors indices given by $N_i= \{i_1,\dots,i_{k_i}\}\subset\{1,\dots,K\}$ for some $k_i\in\N$.
We assume the symmetry condition $j\in N_i$ if and only if $i\in N_j$.

A grid function $\hat{F} \colon V_\mathrm{inn} \rightarrow \mathbb{R}$ is a real-valued function defined on $V_\mathrm{inn}$ which is based on values $u_i = u(x_i)$ of a function $u \colon \overline\Omega \rightarrow \mathbb{R}$ and is given by:
\begin{equation*}
\hat{F}[u](x_i) = \hat{F}_i\left[u_i, \frac{u_i-u_{i_1}}{|x_{i}-x_{i_{1}}|},\dots,\frac{u_i-u_{i_{k_i}}}{|x_{i}-x_{i_{k_i}}|}\right] \ , \quad \text{ for } i = 1,\dots,M,
\end{equation*}
where the functions $\hat{F}_i$ on the right are possibly different for every grid point $x_i\in V_\mathrm{inn}$. 
Then, a discrete solution of \labelcref{eq:elliptic_general} on the unstructured grid introduced above is a grid function $\hat u:V\to\R$ which satisfies 
\begin{align}\label{eq:discrete_solution}
\begin{cases}
\hat{F}[\hat u](x_i) = 0,\quad&i=1,\dots,M,\\
\hat u(x_i) = 0,\quad&i=M+1,\dots,K.
\end{cases}
\end{align}
Any such function can be trivially turned into a function defined on $\overline{\Omega}$ using a closest point projection
\begin{align}
\pi_K : \overline{\Omega}\to V,\quad \pi(x)\in\argmin_{i=1}^K|x-x_i|,
\end{align}
which allows us to define $u_K:=\hat u \circ \pi_K:\overline\Omega\to\R$.
Note that the approximation quality of the grid depends on the choice of the neighborhood and, in particular, it comes with the intrinsic errors
\begin{subequations}\label{eq:errors}
\begin{align}
    dx_{M}&=\max_{i\in\{1,\dots,M\}}\max_{j\in N_i\cap\Omega}|x_i-x_j|,\\
    dx_{N}&=\max_{i\in\{M+1,\dots,M+N\}}\max_{j\in N_i\cap\partial\Omega}|x_i-x_j|,\\
    d\theta_{M}&=\max_{i\in\{1,\dots,M\}}\max_{v\in S^n}\min_{j\in N_i}|v-(x_i-x_j)|,
\end{align}
\end{subequations}
where $S^n$ denotes the unit sphere in $\R^n$.
The first error term describes the grid resolution in the interior of $\Omega$, the second one the resolution of the boundary, and the third error is the directional resolution of inner vertices.
\begin{remark}
\rev 
For convergence of our numerical scheme we shall require that $dx_M,dx_N,d\theta_M\to 0$. 
We emphasize that for standard finite difference schemes (for instance, five point stencils in two dimensions) do not satisfy that $d\theta_M\to 0$. 
For discretizing the linear Laplace equation, which just depends on second derivatives in the coordinate directions, this is not necessary.
However, for the infinity of even $p$-Laplacians, which depend on second derivatives in the direction of the gradient, it is required that the direction resolution goes to zero, cf. \cite{oberman2005convergent,oberman2013finite,oberman2006convergent,bungert2023uniform,roith2023continuum,calder2019consistency,bungert2022ratio,del2022finite}.
In order for a grid to satisfy $d\theta_M\to 0$ the number of points in the computational stencil $N_i$ has to tend to infinity. 
For quantitative rates how fast this growth has to be for the infinity Laplace equation we refer to \cite{bungert2023uniform,bungert2022ratio}.
\nc
\end{remark}

\begin{definition}[Properties of grid functions]
\label{def:deg_elliptic_scheme}
 The grid function $\hat{F}$ is called
\begin{itemize}
\item \emph{degenerate elliptic} if for $i=1,\dots,M$ the functions $\hat{F}_i$ are non-decreasing in the variables $2,\dots,k_i$.
\item \emph{consistent} with respect to $F$ in \labelcref{eq:elliptic_general} if for every $x\in{\Omega}$ and $\phi\in C^2(\overline\Omega)$ it holds
\begin{align*}
\lim_{\substack{K\to\infty\\x_i \to x\\dx_M,d\theta_M\to 0}}\hat{F}[\phi](x_i)
= 
F(\phi(x_i),\nabla\phi(x_i),D^2\phi(x_i)).
\end{align*}
\end{itemize}
\end{definition}
The following proposition gives a convergence criterium for degenerate elliptic schemes.
It is a straightforward generalization of the classical Barles--Souganidis theorem \cite{barles1991convergence} to the case where the equation does not admit a comparison principle or unique solutions, see also \cite{calder2018lecture}.
The proposition shows that any continuous cluster point of a sequence of solutions of \labelcref{eq:discrete_solution} solves \labelcref{eq:elliptic_general}.
\begin{proposition}\label{prop:barles_souganidis}
Assume that the grid function $\hat{F}$ is degenerate elliptic and consistent according to \cref{def:deg_elliptic_scheme}, and let $\hat u_K:V\to\R$ solve \labelcref{eq:discrete_solution}.
Define the function $u_K:=\hat u_K\circ \pi_K$.
If $dx_M,\,dx_N,\,d\theta_M\to 0$ as $K\to\infty$ and a subsequence of $u_K$ converges uniformly to a continuous function $u:\overline{\Omega}\to\R$ then $u$ is a viscosity solution of \labelcref{eq:elliptic_general}.
\end{proposition}
\begin{proof}
We only show the subsolution property since showing the supersolution property works analogously. 

Let $x\in\Omega$ and $\phi\in C^2(\overline\Omega)$ such that $u-\phi$ has a strict maximum at $x$.
Then, since $dx_M\to 0$, there exist sequences $M_k,K_k\to\infty$ as and, after suitable numbering, a sequence of grid points $x_k\to x$ such that $u_{K_k}-\phi$ has a maximum at $x_k$ in the grid.
Hence, for all neighbors $x_{i_k}$ of $x_k$ it holds
\begin{align*}
u_{K_k}(x_k) - u_{K_k}(x_{i_k})
\geq   \phi(x_k) - \phi(x_{i_k}).
\end{align*}
Using $d\theta_M\to 0$ and degenerate ellipticity of $\hat{F}$ we get $0 = \hat F[u_{K_k}](x_k) \geq \hat F[\phi](x_k)$. 
Using consistency we get $F(\phi(x),D\phi(x),D^2\phi(x))\leq 0$.

For $x\in\partial\Omega$, since $dx_N\to 0$, there exists a sequence of grid points $(x_k)_{k\in\N}\subset\partial\Omega$ converging to $x$ and hence we can use the uniform convergence of $u_K$ to $u$ and continuity of $u$ to infer
\begin{align*}
\lim_{k\to\infty}|u_{K_k}(x_k)-u(x)|\leq
\lim_{k\to\infty}|u_{K_k}(x_k)-u(x_k)|+|u(x_k)-u(x)|=0.
\end{align*}
Since $u_{K_k}(x_k)=0$, this is equivalent to $u(x)=0$ and we can conclude the proof.
\end{proof}
Because of the lack of uniqueness of the infinity Laplacian eigenvalue problems discussed in \cref{ss:p-eigenproblem_and_limit} we can only hope for a numerical scheme which possesses convergent subsequences. 
Still, thanks to \cref{prop:barles_souganidis} any continuous subsequential limit is a solution to our infinity Laplacian eigenvalue problems.
Note, furthermore, that in the absence of a comparison principle stability of the numerical scheme $\hat{F}[u]=0$, as assumed in \cite{barles1991convergence}, does not suffice to prove convergence. 
Instead we will need a stronger form of stability, namely uniform Lipschitz continuity of discrete solutions, which allows us to obtain the existence continuous subsequential limits and lets us apply \cref{prop:barles_souganidis}.
This approach is also mentioned in \cite{calder2018lecture}.

In the following proposition, which can be interpreted as discrete-to-continuum version of the Arzela--Ascoli theorem, we state that a uniformly bounded sequence of grid functions with uniformly bounded Lipschitz constants gives rise to a precompact sequence of continuum functions $u_K:=\hat u_K\circ\pi_K$.
The proof is contained in \cite{roith2023continuum} and we only sketch the necessary ingredients here.
\begin{proposition}\label{prop:arzela-ascoli}
Let $\Omega$ be a Lipschitz domain and assume that the sequence of grid functions $\hat u_K:V\to\R$ satisfies
\begin{align}
\sup_{K\in\N}\max_{i=1}^K |\hat u_K(x_i)| < \infty,
\qquad
\sup_{K\in\N}\max_{i=1}^K\max_{j\in N_i} \frac{|\hat u_K(x_i)-\hat u_K(x_j)|}{|x_i-x_j|}<\infty.
\end{align}
Assume that $d x_M,d x_N, d\theta_M\to 0$ as $K\to\infty$.
Then the sequence of functions $u_K:=\hat u_K\circ\pi_K:\overline{\Omega}\to\R$ for $K\in\N$ admits a subsequence converging to a Lipschitz continuous function $u:\overline{\Omega}\to\R$.
\end{proposition}
\begin{proof}
The statement of the proposition was proved in large generality in \cite{roith2023continuum}.
However, the authors used slightly different assumptions which are necessary for other results there but for the compactness statement alone they are not.
Following the proofs of \cite[Lemmas 8-11]{roith2023continuum} verbatim and utilizing that 
\begin{enumerate}
\item $\eta(t):=\frac{1}{t}\geq 1$ for $0<t\leq 1$,
\item since $\Omega$ is a Lipschitz domain there exists a constant $C>0$ such that $d_g(x,y)\leq C|x-y|$ for all $x,y\in\overline{\Omega}$, where $d_g(\cdot,\cdot)$ denotes the geodesic distance in $\overline{\Omega}$,
\end{enumerate}
we obtain the existence of a convergent subsequence.
The Lipschitz continuity of the limit follows from \cite[Lemma 5]{roith2023continuum}.
\end{proof}
Hence, for showing convergence of our numerical scheme it will suffice to show that it is degenerate elliptic, consistent, and allows for solutions with uniformly bounded norms and Lipschitz constants.
Then \cref{prop:barles_souganidis,prop:arzela-ascoli} imply subsequential convergence to a viscosity solution.

\subsection{Approximation of the distance function and infinity eigenvalues}
\label{ss:eigenvalues}
As we have already seen in \cref{thm:existence_ground_state} the first eigenvalue $\Lambda_1$ is directly linked to the geometry of the domain, i.e.,
\begin{equation}\label{eq:first_ev_repeated}
 \Lambda_{1}=\frac{1}{r_{1}},   \quad \text{ with } \, r_{1}=\underset  {x \in \Omega} {\text{max}}\text{ dist}(x, \partial \Omega).
\end{equation}
For simple domains $\Omega \subset \mathbb{R}^2$, such as a circle, square, or triangle, the so-called in-radius $r_1$ and hence also the first eigenvalue $\Lambda_{1}$ can be easily calculated by geometric reasoning. In general, for a more complicated domain $\Omega$ we have to compute the distance function $d(x)=\dist(x,\partial\Omega)$ which is the unique solution of the following Eikonal equation
\begin{equation}\label{Ek}
\left \{
\begin{array}{ll}
 |\nabla d |= 1  &  \text{in }\ \Omega, \\
  d=0         & \text{on }    \partial  \Omega.\\
  \end{array}
\right.
\end{equation}
From the solution of \labelcref{Ek} we thus obtain the in-radius $r_1$ together with the set of points in $\Omega$ where this maximal distance to the boundary is attained. 
The solution of the Eikonal equation on a discrete grid can be approximated with different methods, the best-known of which is the fast marching method \cite{sethian1999fast}. 
Originally formulated on structured grids, it was generalized to weighted graphs in \cite{desquesnes2013eikonal}.
Alternatively, it was shown in \cite{zagatti2014maximal}, that the solution of \labelcref{Ek} coincides with the solution to the optimization problem
\begin{equation}\label{eq:ground_state}
    \max_{\substack{v\in W^{1,\infty}_0(\Omega)\\\norm{\nabla v}_\infty=1}}{\norm{v}_2}
\end{equation}
and it was characterized as nonlinear eigenfunction of a subdifferential operator in \cite{bungert2020structural}. 
There, the same was shown for a graph analogue of \labelcref{eq:ground_state}.
Therefore, one can also use the gradient flow based methods \cite{bungert2019asymptotic,bungert2019nonlinear,feld2019rayleigh} to solve discrete versions of \labelcref{eq:ground_state} or \labelcref{Ek}, respectively.

Hence, by employing any of these methods, one obtains a discrete distance function and an associated first eigenvalue $\hat\Lambda_1$ (cf.~\labelcref{eq:first_ev_repeated}) subordinate to the discrete grid $V$, defined in \cref{ss:monotone_schemes}.
Uniform convergence of distance functions on general graphs to continuum distance functions was proved in \cite{roith2023continuum,
fadili2023limits,
bungert2023uniform}.
A fortiori, this also implies the convergence of the first eigenvalue $\Lambda_1$ in \labelcref{eq:first_ev_repeated}.

One should remark that the second eigenvalue $\Lambda_2$ cannot be approximated as easily.
Remember that it has a geometric characterization as reciprocal of the maximal radius of two equal non-intersecting balls which fit into the domain (cf.~\labelcref{eq:second_eigenvalue}). 
For many symmetric domains (e.g. circle, square, isosceles right triangle, L-shape, etc.) the solution of this sphere packing problem can be derived using elementary geometric reasoning.
However, we could not find a circle / sphere packing algorithm in the literature which works for general domains.

Furthermore, higher infinity-eigenvalues have not yet been characterized. 
Only in some special cases one knows that they are given by the reciprocal of the maximal radius of $k$ equal non-overlapping spheres which fit into the domain~\cite{juutinen2005higher}. 
In these cases one can use known solutions of the general sphere packing problem to obtain the eigenvalue.

\subsection{Approximation of the first eigenfunction}
\label{ss:eigenfunction_first}


Let us consider the first eigenfunction problem:
\begin{equation}\label{eq:first_ef_prob}
\begin{cases}
\min ( |\nabla u|- \Lambda_1 u , -\Delta_{\infty} u ) =0   & \text{ in  } \Omega,\\
u=0   &   \text{ on   } \partial \Omega.
\end{cases}
\end{equation}
As in \cref{ss:monotone_schemes} we subdivide the set of vertices in the discrete grid into $V=V_\mathrm{inn}\cup V_\mathrm{bdry}$ where $V_\mathrm{inn}$ denotes the inner nodes of the grid and $V_\mathrm{bdry}$ denotes the boundary vertices.
We approximate \labelcref{eq:first_ef_prob} by the discrete scheme \labelcref{eq:discrete_solution} where
\begin{equation}\label{eq:discrete_scheme}
	\hat{F}[\hat u](x_i) \ = \	
    \min  (\hat F_{1}^+[\hat u](x_{i}) , \hat F_{2}[\hat u](x_{i})),\quad\text{for }x_i\in V_\mathrm{inn},
\end{equation}
and $\hat F_{1}^+$ and $\hat F_{2}$ are degenerate elliptic and consistent grid functions, which implies the same for~$\hat{F}$.
Note that the superscript in $\hat F_1^+$ serves to distinguish this scheme from a similar one for higher eigenfunctions, introduced in the next section.
\\
\\
We first discuss the grid function $\hat F_1^+$ which is the novelty of our approach.
Taking \labelcref{eq:first_ef_prob} into account we have to approximate the term $|\nabla u|- \Lambda_1 u$. 
In the following, we fix a vertex $x_i$ and denote the distances to its neighbors by $d_{ij}=|x_i-x_j|$ for $j\in N_i$.
We define $\hat F_1^+$ as
\begin{align}\label{eq:def_scheme_F1}
    \hat F_1^+[u](x_i)={u_i-u_{j_{\max}}} - {d_{ij_{\max}}} \hat\Lambda_1 u_i,
\end{align}
where the index $j_{\max}$ is chosen such that
\begin{align}\label{eq:idx_max_gradterm}
    j_{\max}\in\arg\max_{j\in N_i}\frac{u_i-u_j}{d_{ij}}.
\end{align}
The number $\hat\Lambda_1$ is the reciprocal of the maximal value of the distance function on the grid.
By definition of $j_{\max}$ the quantity
\begin{align}\label{eq:resc_F1}
    \frac{1}{d_{ij_{\max}}}\hat{F}_1^+[u](x_i) := \frac{u_i-u_{j_\mathrm{max}}}{{d_{ij_{\max}}}}-\hat{\Lambda}_1 u_i
\end{align}%
is non-decreasing in the differences $(u_i-u_j)/d_{ij}$ and hence degenerate elliptic.
This is in strong contrast to the na\"{i}ve approximation $|\nabla u(x_i)|\approx \max_{j\in N_i}|u_i-u_j|/d_{ij}$ which is not monotone.
Furthermore, using a Taylor expansion for any function $u\in C^2(\overline\Omega)$ it holds
\begin{align*}
    \frac{u(x_i) - u(x_j)}{d_{ij}} 
    = \frac{\nabla u(x_i)\cdot(x_j-x_i)}{d_{ij}} + O(d_{ij})
    =
    \frac{\nabla u(x_i)\cdot(x_j-x_i)}{d_{ij}} + O(dx_M).
\end{align*}
Choosing $j=j_\mathrm{max}$ we observe
\begin{align*}
    \frac{u(x_i) - u(x_{j_\mathrm{max}})}{d_{ij_\mathrm{max}}} = |\nabla u(x_i)| + O(dx_M) + O(d\theta_M)
\end{align*}
which together with convergence of the eigenvalue $\hat{\Lambda}_1$ (see \cref{ss:eigenvalues}) implies the consistency of \labelcref{eq:resc_F1}.
Note that thanks to the homogeneous nature of \labelcref{eq:discrete_solution} we can readily work with $\hat{F}_1^+$, which enjoys the numerically convenient property of a Lipschitz constant which is close to one.
\\
\\
Next we recap the approximation of the infinity Laplacian due to Oberman in \cite{oberman2005convergent}.
One defines a discrete Lipschitz constant $L(u_i)$ of $u$ in $x_i$ for $i\in\{1,\dots,M\}$ as
\[
L(u_{i})= \max_{j\in N_i} \frac{|u_i-u_j|}{d_{ij}}.
\]
In \cite[Theorem 5]{oberman2005convergent} it was proved that the minimizer of this discrete Lipschitz constant with respect to $u_i$ is given by
\begin{align*}
    u^*_i=\argmin_{u_i} L(u_i)=\frac{d_{is}u_{r}+d_{ir}u_{s}}{d_{ir}+d_{is}},
\end{align*}
where the indices $r,s\in N_i$ are chosen such that
\begin{align}\label{eq:idx_max_inf_lapl}
(r,s)\in\arg\max_{k,l\in N_i}{\left\{ \frac{|u_{k}-u_{l}|}{d_{ik}+d_{il}}\right\}}.    
\end{align}
Furthermore, $u^{*}_i$ is non-decreasing as a function of $\{u_{j} \;:\; j\in N_i\}$ and under some mild technical conditions on the grid it holds
\begin{align}\label{eq:consistency_inf_L}
-\Delta_{\infty} u(x_{i})=\frac{2}{d_{ir}d_{is}} \left(u_i- u^*_i\right) + \mathcal{O}(dx_M+d \theta_M),    
\end{align}
where $dx_M$ and $d\theta_M$ denote the errors from \cref{eq:errors}. 
Hence, we can define the grid function $\hat F_2$ in \labelcref{eq:discrete_scheme}, evaluated in a grid point $x_i$ as
\begin{align}\label{eq:def_scheme_F2}
    \hat F_2[u](x_i)={u_i-u^*_i}.
\end{align}
Note that the following quantity is consistent:
\begin{align*}
    \frac{2}{d_{ir}d_{is}}\hat{F}_2[u](x_i) = \frac{u_i-u_i^*}{d_{ir}d_{is}}.
\end{align*}
Furthermore, it holds that
\begin{align*}
    \frac{d_{ir}+d_{is}}{d_{ir}d_{is}}\hat{F}_2[u](x_i)=\frac{u_i-u_r}{d_{ir}}+\frac{u_i-u_s}{d_{is}}.
\end{align*}
Hence, there is a multiple of $\hat{F}_2$ which is consistent and another one which is degenerate elliptic.
However, since \labelcref{eq:discrete_solution} is an homogeneous equation we can simply work with $\hat{F}_2$ which has the convenient property of having unit Lipschitz constant (cf.~\cite{oberman2005convergent}).

We are now able to state our main result in this section, addressing subsequential convergence of solutions to \labelcref{eq:discrete_solution} to solutions of \labelcref{eq:first_ef_prob}.
For this we heavily rely on \cref{prop:barles_souganidis,prop:arzela-ascoli}.
To be able to apply them, we have to normalize the discrete appropriately which is indicated by the homogeneous nature of the considered equations, anyway.
\begin{theorem}\label{thm:discrete_scheme}
Let $\Omega$ be a Lipschitz domain and assume that for every $i\in\{1,\dots,M\}$ and every $r\in N_i$ there exists $s\in N_i$ such that $x_i-x_r$ and $x_s-x_i$ are parallel vectors.
Let the grid functions $\hat u_K:V\to\R$ solve \labelcref{eq:discrete_solution} with $\hat{F}$ given by \labelcref{eq:discrete_scheme}, normalized such that
\begin{align*}
    \max_{i=1}^K\max_{j\in N_i} \frac{|\hat u_K(x_i)-\hat u_K(x_j)|}{|x_i-x_j|} = 1.
\end{align*}
If $dx_M,\,dx_N,\,d\theta_M\to 0$ as $K\to\infty$, then a subsequence of the sequence $u_K:=\hat u_K\circ \pi_K$ converges uniformly to a continuous function $u:\overline{\Omega}\to\R$ which is a viscosity solution of \labelcref{eq:first_ef_prob}.
\end{theorem}
\begin{remark}
\rev 
Let us briefly comment on the assumption on the grid.
We just pose it in order to cite the consistency result \cite[Theorem 6]{oberman2005convergent} which states that \labelcref{eq:consistency_inf_L} is valid.
The assumption requires some symmetry of the grid and the stencil $N_i$ which is satisfied for instance, by a uniform rectangular or hexagonal grid with a distance-based stencil.
We would like to remark that in \cite{oberman2005convergent} the author also poses the assumption that all points in the stencil $N_i$ have distance of the same order to the central point $x_i$ which essentially requires the stencil to be a ring of points. 
However, carefully redoing the proof of \cite[Theorem 6]{oberman2005convergent} one observes that actually the condition is not needed and furthermore that the factor of $2$ in \labelcref{eq:consistency_inf_L} is missing in the original reference. 
\nc 
\end{remark}
\begin{proof}
Using \cite[Theorem 6]{oberman2005convergent} we see that \labelcref{eq:consistency_inf_L} holds true .
Since we have already established degenerate ellipticity and consistency of $\hat{F}_1^+$ and $\hat{F}_2$ (and hence also of the combined scheme $\hat{F}$), it only remains to prove that
\begin{align*}
    \sup_{K\in\N}\max_{i=1}^K |\hat{u}_K(x_i)|<\infty
\end{align*}
which will allow us to conclude the proof by applying \cref{prop:barles_souganidis,prop:arzela-ascoli}.

For this we simply note for any $x_j\in V_\mathrm{bdry}$ we have the trivial discrete Lipschitz estimate
\begin{align*}
    |\hat u_K(x_i)| =
    |\hat u_K(x_i) - \underbrace{\hat{u}_K(x_j)}_{=0}|
    &\leq 
    \underbrace{\max_{i=1}^K\max_{j\in N_i} \frac{|\hat u_K(x_i)-\hat u_K(x_j)|}{|x_i-x_j|}}_{=1} \hat d(x_i)=\hat d(x_i)
\end{align*}
where $\hat d$ denotes the graph distance function from $x_i$ to $V_\mathrm{bdry}$ defined through the dynamic programming principle
\begin{align*}
    \hat d(x_i) = 0 \quad \text{if }x_i\in V_\mathrm{bdry},\qquad 
    \hat d(x_i) = \min_{j\in N_i}(\hat d(x_j) + d_{ij})\quad\text{if }x_i\in V_\mathrm{inn}.
\end{align*}
Consequently, taking the maximum over $i$ and using that $\hat{\Lambda}_1$ is the maximum value of the distance function, it holds
\begin{align*}
    \max_{i=1}^K|\hat{u}_K(x_i)| \leq \max_{i=1}^K \hat d(x_i) = \hat{\Lambda}_1.
\end{align*}
Using the convergence of the eigenvalue $\hat{\Lambda}_1$ to $\Lambda_1$ (see \cref{ss:eigenvalues}), we can take the supremum over $K\in\N$ to obtain
\begin{align*}
    \sup_{K\in\N}\max_{i=1}^K|\hat{u}_K(x_i)|<\infty.
\end{align*}
This concludes the proof.
\end{proof}
\begin{remark}[Existence of discrete solutions]
Note that we do not prove existence of roots of the grid function~\labelcref{eq:discrete_scheme} since this requires some lengthy discrete theory which is beyond the scope of this paper.
\rev The pipeline for proving existence should follow the approach in the continuum:
Starting with discrete $p$-Laplacian problems for which existence can be proved on a variational level, one proves compactness as $p\to\infty$ and passes to the limit in the optimality conditions of the discrete $p$-Laplacian problems to arrive at our discretization $\hat F[\hat u] = 0$.\nc 
\end{remark}

\begin{remark}[Local monotonicity]\label{rem:monotonicity}
One might ask whether the grid function $\hat F_1^+$, and hence also the combined function $\hat{F}$, is monotone in the nodal values $u_i$.
In the theory of monotone schemes this would ensure that the scheme $\hat{F}[u]=0$ possesses a unique solution, which cannot be expected.
However, $\hat F_1^+$ can be rewritten as $\hat F_1^+[u](x_i)=(1-d_{ij_{\max}}\hat\Lambda_1) u_i-u_{j_{\max}}$ and the coefficient $1-d_{ij_{\max}}\hat\Lambda_1$ is non-negative \rev if the stencil size $dx_M$, and hence in particular $d_{ij_{\max}}$, is sufficiently small.
\nc
Since the term $u_{j_{\max}}$ does not change for sufficiently small changes in $u_i$, function $\hat F_1^+$ is at least locally monotone.
Furthermore, from this representation it can be seen that $\hat F_1^+$ is locally 1-Lipschitz. 
\end{remark}

\begin{remark}[Concave approximations of the ground state problem]\label{rem:concave_approximation}
As already discussed in \cref{s:introduction} the solution of the ground state problem \labelcref{eq:first_ef_prob} are in general not unique.
To alleviate this problem one can slightly modify the original problem and study a family of concave approximations, parameterized by a parameter $\alpha \in (0,1)$, of the following form:
\begin{equation}\label{eq:concave_approximation}
\begin{cases}
\min ( |\nabla u|- \Lambda_1 u^\alpha , -\Delta_{\infty} u ) =0   & \text{ in  } \Omega,\\
u=0   &   \text{ on   } \partial \Omega,\\
\norm{u}_{\infty} = 1.&
\end{cases}
\end{equation}
This equation has been introduced and analyzed in \cite{da2019maximal}, admits a comparison principle and hence has a unique solution.
Furthermore, choosing $\alpha$ in \labelcref{eq:concave_approximation} such that $\alpha \nearrow 1$ it was shown that the solutions of the corresponding problems converge towards the pointwise maximal solution of \labelcref{eq:first_ef_prob}.
It is trivial to see that changing $\hat F_1^+[u]$, which was defined in \labelcref{eq:def_scheme_F1}, to
\begin{align}\label{eq:def_scheme_F1_alpha}
    \hat F_1^+[u](x_i)={u_i-u_{j_{\max}}} - {d_{ij_{\max}}} \hat\Lambda_1 u_i^\alpha
\end{align}
yields a convergent discretization in the sense that \cref{thm:discrete_scheme} is valid.
Even more, since \labelcref{eq:concave_approximation} has a unique solution, the convergence is not only subsequential.
\end{remark}

\subsection{Approximation of higher eigenfunctions}
\label{ss:eigenfunction_second}

Similarly as before, we would like to approximate our reformulation for higher eigenfunctions 
$$\min(|\nabla u|-\Lambda u,  -\Delta_\infty)+ \max(-|\nabla u|-\Lambda u,  -\Delta_\infty u) + \Delta_\infty u=0$$
as monotone scheme. 
Analogously to the previous section we approximate this equation with $\hat{F}[u]=0$ where $\hat{F}[u](x_i)$ for $x_i\in V_\mathrm{inn}$ is now given by
\begin{align}\label{eq:discrete_scheme_2}
    \hat{F}[u](x_i) = 	
        \min(\hat F_{1}^+[u](x_{i}) , \hat F_{2}[u](x_{i})) 
        + \max(\hat F_{1}^-[u](x_{i}) , \hat F_{2}[u](x_{i}))
        -\hat F_2[u](x_i),
\end{align}
and $\hat F_1^+$ and $\hat F_2$ are as in \labelcref{eq:def_scheme_F1} and \labelcref{eq:def_scheme_F2}, respectively.
The function $\hat F_1^-$ is given by
\begin{equation}\label{eq:scheme_F1-}
    \hat F_1^-[u](x_i)=u_i-u_{j_{\min}}-d_{ij_{\min}}\hat \Lambda u_{i}.
\end{equation}
Here the index $j_{\min}$ is chosen such that
\begin{align}
    j_{\min}\in\arg\min_{j\in N_i}\frac{u_i-u_j}{d_{ij}}
\end{align}
and hence \labelcref{eq:scheme_F1-} approximates a multiple of $-|\nabla u|-\Lambda u$.
As for the first eigenfunction, one can easily see the consistency of the grid function. 
Furthermore, monotonicity in the differences $u_i-u_j$ is proved as in \cref{prop:deg_elliptic_H}, using that the non-monotone term $-\hat F_2[u](x_i)$ in \labelcref{eq:discrete_scheme_2} always vanishes and the first two terms are obviously monotone.


\subsection{Numerical solution of the schemes}
Now we describe how we solve $\hat{F}[u]=0$ where $\hat F$ given by \labelcref{eq:discrete_scheme} or \labelcref{eq:discrete_scheme_2}.
Due to the non-smoothness of the grid functions $\hat F$, Newton-type methods are not applicable to compute a root of~$\hat F$.
Also quasi-Newton methods require some degree of (directional) differentiability in the root $x^*$ such that $\hat F[u^*]=0$ in order to converge (cf. e.g. \cite{martinez1995inexact,sun1997newton}). 
Due to the strong non-smoothness of $\hat F$ given by \labelcref{eq:discrete_scheme} or \labelcref{eq:discrete_scheme_2}, this is too much of an assumption.
Hence, it seems natural to study the simple fixed-point iteration
\begin{align}\label{eq:fixpoint_it}
    u\gets \hat E[u]
\end{align}
where $\hat E[u]=u-\rho \hat F[u]$ is referred to as Euler map. 
Obviously, roots of $\hat F$ correspond to fixed-points of $\hat E$.
The terminology ``Euler map'' stems from the obvious fact that \labelcref{eq:fixpoint_it} can be seen as explicit Euler discretization of the ODE $\dot{u}(t)=-\hat F[u(t)]$ with time step size $\rho>0$.
Is is well-known (cf.~\cite{oberman2005convergent}, for instance) that if $\rho>0$ is smaller than the reciprocal Lipschitz constant of $\hat F$ and $\hat F$ is monotone in the sense that $u\geq v$ implies $\hat F[u]\geq \hat F[v]$ in the partial order in $\R^M$, then the Euler map $E$ is a contraction.
Since this would in particular imply a unique fixed point of $E$ and hence a unique root of $\hat F$, we cannot expect this in our case.

However, due to the ``local monotonicity'' of $\hat F$ (cf.~\cref{rem:monotonicity}) one can expect that in the proximity of a root the map $\hat F$ is monotone and hence $\hat E$ is a contraction there.
In practice, the fixed point iteration \labelcref{eq:fixpoint_it} converges very reliably on our numerical experiments.
For designing a stopping criterion we utilize both the relative changes of the iterates and the accuracy of the root.
The detailed algorithm to find a root of $\hat F$, and hence an infinity Laplacian eigenfunction, is given in \cref{alg:root}.
\begin{algorithm}[h]
\caption{Root finding of $\hat F[u]=0$ where $\hat F$ is given by \labelcref{eq:discrete_scheme} or \labelcref{eq:discrete_scheme_2}}\label{alg:root}
\begin{algorithmic}[1]
    \STATE{\textbf{input} $u^0\in \R^M$, $\rho > 0$, $\mathrm{crit}=\infty$, $\mathrm{TOL}>0$, $K\in\N$ }
    \STATE{$k \gets 0,\quad u\gets u^0$}
    \WHILE{$k < K \wedge \mathrm{crit}>\mathrm{TOL}$} 
    \STATE{$u^-  \gets u$}
    \STATE{$\mathrlap{u}\hphantom{u^-}\gets u^--\rho  \hat F[u^-]$} 
    \STATE{$\mathrlap{\mathrm{crit}}\hphantom{u^-}\gets \max\left\lbrace\frac{\norm{u-u^-}_\infty}{\norm{u}_\infty},\norm{\hat F(u^-)}_\infty\right\rbrace$}
    \STATE{$\mathrlap{k}\hphantom{u^-} \gets k+1$}
    \ENDWHILE
    \RETURN $u$  
\end{algorithmic}
\end{algorithm}

\section{Experimental results}
\label{s:results}

In the following, we present numerical results which use the schemes and algorithms from \cref{s:numerics}.
Many of the experiments deal with open questions and conjectures regarding infinity eigenfunctions and, thereby, we hope to shed some light on the theory.

The computations take place on a regular grid which discretizes the unit square $[-1,1]^2$.
In order to compute on more general domains we simply restrict the computations on those grid nodes which belong to the domain of interest (see, for instance, \cref{sss:domains} below).

In all experiments apart from the very first one we choose the number of local neighbors $k_i$ of a generic node $x_i\in V_\mathrm{inn}$---which appears in \labelcref{eq:idx_max_gradterm} and \labelcref{eq:idx_max_inf_lapl}, for instance---as $k_i=120$.
In our regular grid this corresponds to a quadratic stencil of $11\times 11$ around the node of interest. 
If parts of the stencil leave the computational domain---which happens close to the boundary, for instance---we simply reduce the number of neighbors of the corresponding node.
We discretize the unit square including its boundary with $97\times 97$ nodes.

If not stated differently, the inputs in \cref{alg:root} were chosen as follows. 
When computing infinity ground states (cf.~\cref{ss:ground_state_numerics} below) the initial guess $u^0\in\R^M$ is chosen as discrete distance function of the domain\footnote{We used the MATLAB\textsuperscript{\textregistered} routine \texttt{bwdist}, Copyright 1993-2017 The MathWorks, Inc.}.
The constant $\rho>0$---which should be chosen smaller than the reciprocal Lipschitz constant of $\hat F$---is chosen as $\rho = 0.9$.
Note that the Lipschitz constants of $\hat F$ given by \labelcref{eq:discrete_scheme} or \labelcref{eq:discrete_scheme_2} are close to one.
We allow for a maximum of $K=5000$ iterations and choose the tolerance $\mathrm{TOL}=10^{-7}$.
Let us remark that in almost all our experiments the algorithm required only a few hundred iterations in order to reach the tolerance.
Furthermore, the tolerance should scale with the square of the characteristic grid size which can be seen from~\labelcref{eq:consistency_inf_L}.

Our implementation uses MathWorks MATLAB\textsuperscript{\textregistered} R2018b and a typical test-case requires a few minutes of computation on a standard laptop computer.
Code is available on \texttt{GitHub}.\footnote{\url{https://github.com/leon-bungert/Infinity-Laplacian-Eigenfunctions}}

\subsection{Infinity ground states}
\label{ss:ground_state_numerics}
In this section we perform numerical experiments for infinity ground states, by computing a root of~\labelcref{eq:discrete_scheme}. 
The eigenvalue occurring in~\labelcref{eq:def_scheme_F1} is chosen as maximum of the distance function on the grid.

\subsubsection{Influence of the number of local neighborhood size}

First, we would like to investigate the influence of the number of local neighbors $k_i$ on the computed ground state. 
Remember that due to \cref{thm:discrete_scheme} one can expect more accurate results as the number increases.
In \cref{fig:contour_plots_neighborhoods} we show the level lines of the ground state on the unit square $[-1,1]^2$, computed using neighborhoods of size $3\times 3$, $5\times 5$, $7\times 7$, and $11\times 11$.
Looking at the level lines, one can observe that the smoothness of the ground state increases as the neighborhood size grows.
This can be explained by a more accurate approximation of~$\nabla u$ and its norm.
Further experiments show the same behavior for the infinity harmonic function on the punctured square $[-1,1]^2\setminus\{0\}$ (see also \cite{oberman2005convergent} for similar observations).
In the following experiments we will use the $11\times 11$ stencil in order to produce accurate results.%
\begin{figure}[h!]
    \def\PicWidth{0.23\textwidth}
    \newcommand{\PlotImage}[1]{%
    {\includegraphics[width=\PicWidth, trim = 160px 40px 170px 50px,clip]{figures/neighborhood/#1}}
    }%
    \PlotImage{nbr_3_contour}\hfill%
    \PlotImage{nbr_5_contour}\hfill%
    \PlotImage{nbr_7_contour}\hfill%
    \PlotImage{nbr_11_contour}%
    \caption{From left to right: level lines of infinity ground state on the square for stencils of size $3\times 3$, $5\times 5$, $7\times 7$, and $11\times 11$. Smoothness of the level lines increases with larger neighborhoods.}
    \label{fig:contour_plots_neighborhoods}
\end{figure}

\subsubsection{Infinity ground state and infinity harmonic on the square}

In this experiment we numerically investigate the long-standing conjecture that the infinity harmonic function on the punctured square, i.e., the square without its center point, is a ground state (cf.~e.g~\cite{juutinen1999infinity}).
Note that the analogue of this statement is known to be true on stadium-like domains~\cite{yu2007some} (like for instance the ball) but is false in general~\cite{lindgren2012infty}.
In fact, only recently the conjecture was proved false by constructing the explicit unique solution of the infinity Laplace equation on the punctured square which does not solve the ground state problem \cite{brustad2022solution,brustad2023infinity}.

In \cref{fig:comparison_ef_harm} we show the infinity ground state on the square, computed with our method, the infinity harmonic function on the punctured square, and their pointwise difference.
Note that we compute the infinity harmonic function by simply solving the scheme $\hat F_2[u]=0$ together with appropriate boundary conditions, where $\hat F_2$ is given by~\labelcref{eq:def_scheme_F2}.
We compute both solutions with high accuracy such that they solve their respective equations up to an $L^\infty$-error of at most $10^{-9}$.
Although not visible from the solutions themselves, their difference, plotted on the right, exhibits an $L^\infty$-norm of order $10^{-3}$ which confirms that ground state and infinity harmonic do not coincide.\footnote{In an earlier version of this manuscript we wrote the opposite conclusion which we assertively revise.}
Furthermore, as predicted by theory \cite{juutinen1999infinity}, the ground state is pointwise larger or equal than the infinity harmonic function which is reflected by the difference being non-negative.
\begin{figure}[h!]
    \def\PicWidth{0.32\textwidth}
    \newcommand{\PlotImage}[1]{%
    {\includegraphics[width=\PicWidth, trim = 50px 60px 60px 60px,clip]{figures/ef_harm/#1}}
    }%
    \centering%
    \PlotImage{ef_compressed}%
    \hfill%
    \PlotImage{harm_compressed}%
    \hfill%
    \PlotImage{difference_compressed}%
    \caption{Infinity ground state (\textbf{left}), infinity harmonic (\textbf{center}) function, and difference (\textbf{right}) on the square.}
    \label{fig:comparison_ef_harm}
\end{figure}

\subsubsection{Infinity ground states on different domains}
\label{sss:domains}
\begin{figure}[h!]
    \def\PicWidth{0.32\textwidth}
    \newcommand{\PlotImage}[1]{%
    {\includegraphics[width=\PicWidth, trim = 70px 60px 60px 80px,clip]{figures/domains/#1}}
    }%
    \PlotImage{square-disk_compressed}\hfill%
    \PlotImage{ellipsis_compressed}\hfill%
    \PlotImage{triangle_compressed}\hfill\\%
    \PlotImage{L-shape_compressed}\hfill%
    \PlotImage{dumbbell_compressed}\hfill%
    \PlotImage{heart_compressed}\hfill%
    \caption{Infinity ground states on different domains. All results were initialized with the distance function of the domain.}
    \label{fig:domains}
\end{figure}
With this test-case we demonstrate the aptness of our algorithm to compute infinity ground state also on more complicated and in particular non-convex domains.
\cref{fig:domains} shows the computed ground states on six different convex (top row) and non-convex (bottom row) domains.
The shapes of the ground states are similar to $p$-Laplacian ground states for large values of~$p$, see for instance~\cite{horak2011numerical,bozorgnia2016convergence}.

\subsubsection{Regularity of ground states}

Next we study the regularity of ground states which is still an open problem from the theoretical perspective.
Some results on singular sets of ground states were proven in~\cite{yu2007some}, among which are the statements that ground states are non-differentiable in the maximal set of the distance function and that singular points of the gradient are not isolated.
Furthermore, in two dimensions ground states are $C^1$ away from their maximal set if and only if they are infinity harmonic there.
However, general statements on the regularity of ground states outside the maximal set are still pending. 

Here we recap the ground state computed for the dumbbell shape (see also bottom center in \cref{fig:domains}).
\cref{fig:contour_dumbbell} shows the level lines of the ground state and exhibits a non-smoothness along the line segment which connects to the two maxima.
Here the level lines show kinks and even touch in the center of the domain.
This suggests that ground states are non-differentiable, in general.

\begin{figure}[h!]
    \def\PicWidth{0.4\textwidth}
    \newcommand{\PlotImage}[1]{%
    {\includegraphics[width=\PicWidth, trim = 160px 40px 170px 50px,clip]{figures/domains/#1}}
    }%
    \centering%
    \PlotImage{dumbbell_contour_20}
    \caption{Level lines of the dumbbell ground state (cf.~bottom center in \cref{fig:domains}). The gradient looks singular between the two maxima.}
    \label{fig:contour_dumbbell}  
\end{figure}    

\subsubsection{Discrete non-uniqueness on the rectangle}
\label{sss:rectangle}

\begin{figure}[h!]
    \def\PicWidth{0.48\textwidth}
    \newcommand{\PlotImage}[1]{%
    {\includegraphics[width=\PicWidth, trim = 70px 100px 60px 120px,clip]{figures/rectangle/#1}}
    }%
    \PlotImage{init_distance_compressed}\hfill%
    \PlotImage{init_zero_compressed}
    \def\PicWidth{0.48\textwidth}
    \renewcommand{\PlotImage}[1]{%
    {\includegraphics[width=\PicWidth, trim = 80px 40px 80px 50px,clip]{figures/rectangle/#1}}
    }%
    \PlotImage{init_distance_contour}\hfill%
    \PlotImage{init_zero_contour}
    \def\PicWidth{0.48\textwidth}
    \renewcommand{\PlotImage}[1]{%
    {\includegraphics[width=\PicWidth, trim = 70px 100px 60px 120px,clip]{figures/rectangle/#1}}
    }%
    \PlotImage{init_distance_fine_compressed}\hfill%
    \PlotImage{init_zero_fine_compressed}
    \def\PicWidth{0.48\textwidth}
    \renewcommand{\PlotImage}[1]{%
    {\includegraphics[width=\PicWidth, trim = 80px 40px 80px 50px,clip]{figures/rectangle/#1}}
    }%
    \PlotImage{init_distance_contour_fine}\hfill%
    \PlotImage{init_zero_contour_fine}
    \caption{Discrete non-uniqueness of ground states on the rectangle for different grid resolutions. Surface and contour plots of computed results, initialized with distance function (\textbf{left}) and zero (\textbf{right}). The pointwise difference at a low resolution is substantial (\textbf{top rows}). For high resolutions they are less different (\textbf{bottom rows}).}
    \label{fig:nonuniqueness_rectangle}
\end{figure}

\begin{figure}[h!]
    \def\PicWidth{0.48\textwidth}
    \newcommand{\PlotImage}[1]{%
    {\includegraphics[width=\PicWidth, trim = 70px 100px 60px 120px,clip]{figures/rectangle_alpha/#1}}
    }%
    \PlotImage{init_distance_compressed}\hfill%
    \PlotImage{init_zero_compressed}
    \def\PicWidth{0.48\textwidth}
    \renewcommand{\PlotImage}[1]{%
    {\includegraphics[width=\PicWidth, trim = 80px 40px 80px 50px,clip]{figures/rectangle_alpha/#1}}
    }%
    \PlotImage{init_distance_contour}\hfill%
    \PlotImage{init_zero_contour}
    \def\PicWidth{0.48\textwidth}
    \renewcommand{\PlotImage}[1]{%
    {\includegraphics[width=\PicWidth, trim = 70px 100px 60px 120px,clip]{figures/rectangle_alpha/#1}}
    }%
    \PlotImage{init_distance_fine_compressed}\hfill%
    \PlotImage{init_zero_fine_compressed}
    \def\PicWidth{0.48\textwidth}
    \renewcommand{\PlotImage}[1]{%
    {\includegraphics[width=\PicWidth, trim = 80px 40px 80px 50px,clip]{figures/rectangle_alpha/#1}}
    }%
    \PlotImage{init_distance_contour_fine}\hfill%
    \PlotImage{init_zero_contour_fine}
    \caption{The corresponding results to \cref{fig:nonuniqueness_rectangle} for the concave appoximation model \labelcref{eq:concave_approximation}.}
    \label{fig:nonuniqueness_rectangle_alpha}
\end{figure}

In this experiment we address the question of uniqueness of infinity ground states, computed with our method.
As mentioned in \cref{rem:nonuniqueness}, it is known that ground states are in general not unique, see, e.g., ~\cite{hynd2013nonuniqueness} in which a non-convex dumbbell domain was constructed where uniqueness fails.
However, for convex domains uniqueness is neither proved nor disproved apart from the case of stadium-like domains~\cite{yu2007some}.

In line with these observations for the continuous problem, non-uniqueness can also be observed when numerically computing ground states with our discrete scheme.
Depending on the initialization of the scheme one may get two substantially different solutions of the discrete problem.

In \cref{fig:nonuniqueness_rectangle} we show surface and contour plots of two different discrete ground states on the rectangle $[-1,1]\times[-0.5,0.5]$, computed with our method.
Both results fulfill $\hat F[u]=0$ with very high accuracy.
In this experiment we fix the value $u(0,0)=0.5$ which is no loss of generality due to the homogeneity of the eigenvalue problem~\labelcref{eq:discrete_scheme}.
The first result was computed by initializing \cref{alg:root} with the distance function, whereas the second one was initialized with zero.
The two results differ significantly: the first one attains its maximum on the so-called high ridge of the rectangle, given by $[-0.5,0.5]\times\{0\}$.
In contrast, the second result attains its maximum only in the point $(0,0)$ and its level lines are not parallel to the long sides of the rectangle as it is the case for the first ground state.
However, for finer directional and spatial resolutions of the grid the two computed ground states do not differ as severely anymore, as can be seen in the two bottom rows of \cref{fig:nonuniqueness_rectangle}.
This is, of course, in line with our convergence result \cref{thm:discrete_scheme} and the fact that the continuum ground state attains its maximum on the entire high ridge \cite{yu2007some}.
Note that in particular the high directional resolution, which is guaranteed through large computational stencils, is the limiting factor in computing solutions on even finer grids.

Alternatively, instead of computing high resolution ground states, which is numerically cumbersome, we suggest to solve the concave approximation \labelcref{eq:concave_approximation} of the original discrete problem in order to gain a unique solution.
Naturally, we choose $\alpha \in (0,1)$ close to $1$, as discussed in \cref{rem:concave_approximation}, to numerically approximate the original discrete problem well.
In \cref{fig:nonuniqueness_rectangle_alpha} we show that, even for small stencils and a low resolution, this slight modification effectively selects a ground state which is independent of the initialization of the scheme and has the desired properties of the high ridge as discussed above.

\clearpage

\subsection{Higher eigenfunctions}
\label{ss:higher_efs_numerics}

Until now we numerically computed first eigenfunctions of the infinity Laplacian.
In the following we concentrate on the computation of higher eigenfunctions, which comes with two major challenges.

First, the computation of higher eigenvalues is difficult, as already explained in \cref{ss:eigenvalues}.
While the second eigenvalue has a variational characterization in terms of a sphere packing problem~\labelcref{eq:second_eigenvalue}, such a property is unknown for higher eigenvalues so far.
However, for certain symmetric domains like the square one can compute higher eigenvalues explicitly.
In this section we concentrate on symmetric domains for our experiments, such that we are able to deduce the unknown eigenvalue from its geometry.

This leaves the problem of finding a suitable initialization for \cref{alg:root}.
Here, we investigate three different strategies: 
The first one is to use the symmetry of the domain to fix the maximal and minimal value of the eigenfunction in two designated points and initialize the rest with zero. 
The second one consists in initializing with higher eigenfunctions of the linear Laplacian. 
The third strategy is adapted to second eigenfunctions and consists in initializing randomly and slightly modifying \cref{alg:root} by adding the normalization step
\begin{align}\label{eq:normalization_step}
    u_\mathrm{P} \gets \frac{\max(u,0)}{\norm{\max(u,0)}_\infty},\qquad
    u_\mathrm{N}\gets \frac{\max(-u,0)}{\norm{\max(-u,0)}_\infty},\qquad
    u \gets u_\mathrm{P} - u_\mathrm{N}
\end{align}
after the fixed-point iteration.
This assures that the maximum of the positive and negative parts of $u$ are equal, which is a necessary condition for second eigenfunctions~\cite{juutinen2005higher}.

\cref{fig:2nd_efs} shows second infinity eigenfunctions on three different domains, for which the second eigenvalue can be computed. 
The algorithm was initialized with zero and the peak values of the eigenfunctions were fixed. Again the results are similar to second eigenfunctions of the $p$-Laplacian for large $p$ (e.g.~\cite{horak2011numerical}). 
\begin{figure}[h!]
    \def\PicWidth{0.32\textwidth}
    \newcommand{\PlotImage}[1]{%
    {\includegraphics[width=\PicWidth, trim = 70px 60px 60px 80px,clip]{figures/higher_efs/#1}}
    }%
    \PlotImage{2nd_disk_compressed}\hfill%
    \PlotImage{2nd_heart_compressed}\hfill%
    \PlotImage{2nd_L-shape_compressed}%
    \caption{Second infinity eigenfunctions on different domains.}
    \label{fig:2nd_efs}
\end{figure}

In \cref{fig:higher_efs_square} we show three different eigenfunctions on the square where we initialized with the first three eigenfunctions of the standard Laplacian on the square which can be computed with standard linear algebra tools\footnote{We used the MATLAB\textsuperscript{\textregistered} routine \texttt{eigs}, Copyright 1984-2018 The MathWorks, Inc.}.
\begin{figure}[h!]
    \def\PicWidth{0.32\textwidth}
    \newcommand{\PlotImage}[1]{%
    {\includegraphics[width=\PicWidth, trim = 70px 60px 60px 80px,clip]{figures/higher_efs/#1}}
    }%
    \PlotImage{1st_square_compressed}\hfill%
    \PlotImage{2nd_square_compressed}\hfill%
    \PlotImage{higher_square_compressed}%
    \caption{Infinity eigenfunction on the square, initialized with first three Laplacian eigenfunctions.}
    \label{fig:higher_efs_square}
\end{figure}

Finally, we also show a result which was computed with the normalizations steps~\labelcref{eq:normalization_step} of the positive and negative parts of the solution.
The initialization was chosen as random noise and also this method converges to a second eigenfunction nicely, as visualized in \cref{fig:triangle_normalized}, which shows the solution after 0, 300, and 655 iterations of \cref{alg:root} with the normalizations~\labelcref{eq:normalization_step}.

\begin{figure}[h!]
    \def\PicWidth{0.32\textwidth}
    \newcommand{\PlotImage}[1]{%
    {\includegraphics[width=\PicWidth, trim = 70px 60px 60px 80px,clip]{figures/higher_efs/#1}}
    }%
    \PlotImage{iteration_0_compressed}\hfill%
    \PlotImage{iteration_300_compressed}\hfill%
    \PlotImage{2nd_triangle_normalization_compressed}%
    \caption{Second infinity eigenfunction on the triangle, computed with the normalization~\labelcref{eq:normalization_step}. From left to right: solution at iteration 0, 300, and 655 (converged).}
    \label{fig:triangle_normalized}
\end{figure}

\section{Conclusion}

In the first part of this work we have presented a reformulation of the eigenvalue problem for the infinity Laplacian, such that higher eigenfunctions can be computed, avoiding the distinction of cases on their sign.
In the second part, we proposed consistent and monotone schemes for approximating ground states and higher eigenfunctions, building on our reformulation from the first part. 
The discrete problem is solved by a fixed-point iteration.
Our numerical results show the aptness of the proposed numerical scheme to approximate eigenfunctions even on complicated domains. 
This appears to be the first numerical method in the literature to compute infinity Laplacian eigenfunctions.

There are four open problems related to our work, which will be subject to future research. 
First, our aim is to find an algorithm to compute the second eigenvalue on general domains.
While on symmetric domains one can easily compute it using the distance function on the expected nodal domains, a theoretically sound approach for more complicated domains has to be investigated.
We believe that the several variational characterizations of the second eigenvalue in \cite{juutinen2005higher} can be a promising route for that.

Second, the fixed-point iteration, which we currently use to compute roots of the grid functions, can possibly be replaced by a more involved non-smooth Newton-type method. 
However, currently we are not aware of an alternative method which can handle the strong non-smoothness of the problem.

Third, to replace subsequential by full convergence as the grid gets finer, a promising avenue might be to couple the concave approximation parameter $\alpha$ in \labelcref{eq:concave_approximation} with the grid parameters in order to ensure convergence to the unique pointwise maximal ground state.
For higher eigenfunctions, however, no such selection principle is known yet, which is another interesting avenue of research.

Finally, proving convergence rates for our numerical scheme is an especially challenging endeavour due to the strong non-smoothness of infinity ground states. 
In parts of the domain where a solution of $\min(|\nabla u|-\Lambda u,-\Delta_\infty u)=0$ is $C^2$, it is know \cite{juutinen1999infinity} that it in fact solves $\Delta_\infty u = 0$.
For this infinity Laplacian equation convergence rates in the supremum norm were established recently~\cite{bungert2023uniform,bungert2022ratio}.
The techniques used there strongly rely on comparison principles which are not available for the infinity eigenfunction problem. 
Furthermore, if rates can be proved, they will be extremely slow which can be seen from the example in \cref{sss:rectangle}.

\section*{Acknowledgements}
This work was supported by the European Unions Horizon 2020 research and innovation
programme under the Marie Sk\l{}odowska-Curie grant agreement No. 777826 (NoMADS).
Most of this work was done while LB was affiliated with the Universities of Erlangen and Bonn, supported by the Deutsche Forschungsgemeinschaft (DFG, German Research Foundation) under Germany's Excellence Strategy - GZ 2047/1, Projekt-ID 390685813. 
LB is also thankful for interesting discussions on this topic with Peter Lindqvist and Karl Brustad.

\section*{Declarations}

The authors have no relevant financial or non-financial interests to disclose.
The authors have no competing interests to declare that are relevant to the content of this article.
All authors certify that they have no affiliations with or involvement in any organization or entity with any financial interest or non-financial interest in the subject matter or materials discussed in this manuscript.
The authors have no financial or proprietary interests in any material discussed in this article.

\section*{Data availability statement}

The datasets generated during and/or analysed during the current study are available in \texttt{GitHub}: \url{https://github.com/leon-bungert/Infinity-Laplacian-Eigenfunctions}.

\printbibliography

@article{barles1991convergence,
  title={{Convergence of approximation schemes for fully nonlinear second order equations}},
  author={Barles, Guy and Souganidis, Panagiotis E},
  journal={Asymptotic analysis},
  volume={4},
  number={3},
  pages={271--283},
  year={1991},
  publisher={IOS Press}
}

@article{aronsson1965minimization,
  title={{Minimization problems for the functional $\sup_x F (x, f (x), f'(x))$}},
  author={Aronsson, Gunnar},
  journal={Arkiv f{\"o}r matematik},
  volume={6},
  number={1},
  pages={33--53},
  year={1965},
  publisher={Springer}
}

@article{aronsson2004tour,
  title={{A tour of the theory of absolutely minimizing functions}},
  author={Aronsson, Gunnar and Crandall, Michael and Juutinen, Petri},
  journal={Bulletin of the American mathematical society},
  volume={41},
  number={4},
  pages={439--505},
  year={2004}
}

@article{desquesnes2013eikonal,
  title={{Eikonal equation adaptation on weighted graphs: fast geometric diffusion process for local and non-local image and data processing}},
  author={Desquesnes, Xavier and Elmoataz, Abderrahim and L{\'e}zoray, Olivier},
  journal={Journal of Mathematical Imaging and Vision},
  volume={46},
  number={2},
  pages={238--257},
  year={2013},
  publisher={Springer}
}

@article{barron2008infinity,
  title={{The infinity Laplacian, Aronsson's equation and their generalizations}},
  author={Barron, E and Evans, L and Jensen, R},
  journal={Transactions of the American Mathematical Society},
  volume={360},
  number={1},
  pages={77--101},
  year={2008}
}

@article{bozorgnia2016convergence,
  title={{Convergence of inverse power method for first eigenvalue of $p$-Laplace operator}},
  author={Bozorgnia, Farid},
  journal={Numerical Functional Analysis and Optimization},
  volume={37},
  number={11},
  pages={1378--1384},
  year={2016},
  publisher={Taylor \& Francis}
}

@inproceedings{crandall2001remark,
  title={{A remark on infinity harmonic functions}},
  author={Crandall, Michael G and Evans, Lawrence C},
  booktitle={{Proceedings of the USA-Chile Workshop on Nonlinear Analysis (Vina del Mar-Valparaiso, 2000)}},
  volume={6},
  pages={123--129},
  year={2001}
}

@article{crandall1992user,
  title={{User's guide to viscosity solutions of second order partial differential equations}},
  author={Crandall, Michael G and Ishii, Hitoshi and Lions, Pierre-Louis},
  journal={Bulletin of the American mathematical society},
  volume={27},
  number={1},
  pages={1--67},
  year={1992}
}

@article{crandall2001optimal,
  title={{Optimal Lipschitz extensions and the infinity Laplacian}},
  author={Crandall, Michael G and Evans, Lawrence C and Gariepy, Ronald F},
  journal={Calculus of Variations and Partial Differential Equations},
  volume={13},
  number={2},
  pages={123--139},
  year={2001},
  publisher={Springer}
}

@article{elmoataz2015p,
  title={{On the $p$-Laplacian and $\infty$-Laplacian on graphs with applications in image and data processing}},
  author={Elmoataz, Abderrahim and Toutain, Matthieu and Tenbrinck, Daniel},
  journal={SIAM Journal on Imaging Sciences},
  volume={8},
  number={4},
  pages={2412--2451},
  year={2015},
  publisher={SIAM}
}

@article{elmoataz2017nonlocal,
  title={{Nonlocal pdes on graphs: From tug-of-war games to unified interpolation on images and point clouds}},
  author={Elmoataz, Abderrahim and Lozes, Fran{\c{c}}ois and Toutain, Matthieu},
  journal={Journal of Mathematical Imaging and Vision},
  volume={57},
  number={3},
  pages={381--401},
  year={2017},
  publisher={Springer}
}

@book{gilboa2018nonlinear,
  title={{Nonlinear Eigenproblems in Image Processing and Computer Vision}},
  author={Gilboa, Guy},
  year={2018},
  publisher={Springer}
}

@article{hynd2013nonuniqueness,
  title={{Nonuniqueness of infinity ground states}},
  author={Hynd, Ryan and Smart, Charles K and Yu, Yifeng},
  journal={Calculus of Variations and Partial Differential Equations},
  volume={48},
  number={3-4},
  pages={545--554},
  year={2013},
  publisher={Springer}
}

@article{horak2011numerical,
  title={Numerical investigation of the smallest eigenvalues of the $p$-Laplace operator on planar domains},
  author={Hor{\'a}k, Ji{\v{r}}{\'i}},
  journal={Electronic Journal of Differential Equations},
  volume={2011},
  number={132},
  pages={1--30},
  year={2011}
}

@article{jensen1993uniqueness,
  title={{Uniqueness of Lipschitz extensions: minimizing the sup norm of the gradient}},
  author={Jensen, Robert},
  journal={Archive for Rational Mechanics and Analysis},
  volume={123},
  number={1},
  pages={51--74},
  year={1993},
  publisher={Springer}
}

@book{juutinen1998minimization,
  title={{Minimization problems for Lipschitz functions via viscosity solutions}},
  author={Juutinen, Petri},
  year={1998},
  publisher={Suomalainen tiedeakatemia}
}

@article{juutinen2005higher,
  title={{On the higher eigenvalues for the $\infty$-eigenvalue problem}},
  author={Juutinen, Petri and Lindqvist, Peter},
  journal={Calculus of Variations and Partial Differential Equations},
  volume={23},
  number={2},
  pages={169--192},
  year={2005},
  publisher={Springer}
}

@article{juutinen1999eigenvalue,
  title={{The $\infty$-eigenvalue problem}},
  author={Juutinen, Petri and Lindqvist, Peter and Manfredi, Juan J},
  journal={Archive for rational mechanics and analysis},
  volume={148},
  number={2},
  pages={89--105},
  year={1999},
  publisher={Springer}
}

@book{juutinen1999infinity,
  title={{The infinity Laplacian: examples and observations}},
  author={Juutinen, Petri and Lindqvist, Peter and Manfredi, Juan J},
  year={1999},
  publisher={Institut Mittag-Leffler}
}

@article{le2006eigenvalue,
  title={{Eigenvalue problems for the $p$-Laplacian}},
  author={L{\^e}, An},
  journal={Nonlinear Analysis: Theory, Methods \& Applications},
  volume={64},
  number={5},
  pages={1057--1099},
  year={2006},
  publisher={Elsevier}
}

@article{lindqvist2000superharmonicity,
  title={{Superharmonicity of nonlinear ground states}},
  author={Lindqvist, Peter and Manfredi, Juan J and Saksmann, Eero},
  journal={Revista Matematica Iberoamericana},
  volume={16},
  number={1},
  pages={17--28},
  year={2000}
}

@article{lindqvist1990equation,
  title={{On the equation $\mathrm{div}(|\nabla u|^{p-1}\nabla u)+\lambda|u|^{p-2}u=0$}},
  author={Lindqvist, Peter},
  journal={Proceedings of the American Mathematical Society},
  pages={157--164},
  year={1990},
  publisher={JSTOR}
}

@article{oberman2006convergent,
  title={{Convergent difference schemes for degenerate elliptic and parabolic equations: Hamilton--Jacobi equations and free boundary problems}},
  author={Oberman, Adam M},
  journal={SIAM Journal on Numerical Analysis},
  volume={44},
  number={2},
  pages={879--895},
  year={2006},
  publisher={SIAM}
}

@article{oberman2005convergent,
  title={{A convergent difference scheme for the infinity Laplacian: construction of absolutely minimizing Lipschitz extensions}},
  author={Oberman, Adam},
  journal={Mathematics of computation},
  volume={74},
  number={251},
  pages={1217--1230},
  year={2005}
}

@article{peres2009tug,
  title={{Tug-of-war and the infinity Laplacian}},
  author={Peres, Yuval and Schramm, Oded and Sheffield, Scott and Wilson, David},
  journal={Journal of the American Mathematical Society},
  volume={22},
  number={1},
  pages={167--210},
  year={2009}
}

@article{evans2008c,
  title={{$C^{1,\alpha}$ regularity for infinity harmonic functions in two dimensions}},
  author={Evans, Lawrence C and Savin, Ovidiu},
  journal={Calculus of Variations and Partial Differential Equations},
  volume={32},
  number={3},
  pages={325--347},
  year={2008},
  publisher={Springer}
}

@article{sethian1999fast,
  title={{Fast marching methods}},
  author={Sethian, James A},
  journal={SIAM review},
  volume={41},
  number={2},
  pages={199--235},
  year={1999},
  publisher={SIAM}
}

@article{zagatti2014maximal,
  title={{Maximal generalized solution of eikonal equation}},
  author={Zagatti, Sandro},
  journal={Journal of Differential Equations},
  volume={257},
  number={1},
  pages={231--263},
  year={2014},
  publisher={Elsevier}
}

@article{bungert2019nonlinear,
  title={Nonlinear spectral decompositions by gradient flows of one-homogeneous functionals},
  author={Bungert, Leon and Burger, Martin and Chambolle, Antonin and Novaga, Matteo},
  journal={Analysis \& PDE},
  volume={14},
  number={3},
  pages={823--860},
  year={2021},
  publisher={Mathematical Sciences Publishers}
}

@article{bungert2019asymptotic,
  title={{Asymptotic profiles of nonlinear homogeneous evolution equations of gradient flow type}},
  author={Bungert, Leon and Burger, Martin},
  journal={Journal of Evolution Equations},
  volume={20},
  number={3},
  pages={1061--1092},
  year={2020},
  publisher={Springer},
  doi={10.1007/s00028-019-00545-1},
  keywords = {key}
}

@article{bungert2020structural, 
	title={Structural analysis of an $L$-infinity variational problem and relations to distance functions}, 
	volume={2}, 
	number={3}, 
	journal={Pure and Applied Analysis},
	publisher={Mathematical Sciences Publishers}, 
  	doi={10.2140/paa.2020.2.703},
	author={Bungert, Leon and Korolev, Yury and Burger, Martin}, 
	year={2020}, 
	pages={703–738} 
}

@article{evans2011everywhere,
  title={{Everywhere differentiability of infinity harmonic functions}},
  author={Evans, Lawrence C and Smart, Charles K},
  journal={Calculus of Variations and Partial Differential Equations},
  volume={42},
  number={1-2},
  pages={289--299},
  year={2011},
  publisher={Springer}
}

@article{feld2019rayleigh,
  title={{Rayleigh quotient minimization for absolutely one-homogeneous functionals}},
  author={Feld, Tal and Aujol, Jean-Fran{\c{c}}ois and Gilboa, Guy and Papadakis, Nicolas},
  journal={Inverse Problems},
  volume={35},
  number={6},
  pages={064003},
  year={2019},
  publisher={IOP Publishing}
}

@article{yu2007some,
  title={{Some properties of the ground states of the infinity Laplacian}},
  author={Yu, Yifeng},
  journal={Indiana University mathematics journal},
  pages={947--964},
  year={2007},
  publisher={JSTOR}
}

@article{martinez1995inexact,
  title={{Inexact Newton methods for solving nonsmooth equations}},
  author={Mart{\'i}nez, Jos{\'e} Mario and Qi, Liqun},
  journal={Journal of Computational and Applied Mathematics},
  volume={60},
  number={1-2},
  pages={127--145},
  year={1995},
  publisher={Elsevier}
}

@article{sun1997newton,
  title={{Newton and quasi-Newton methods for a class of nonsmooth equations and related problems}},
  author={Sun, Defeng and Han, Jiye},
  journal={SIAM Journal on Optimization},
  volume={7},
  number={2},
  pages={463--480},
  year={1997},
  publisher={SIAM}
}

@online{lindgren2012infty,
      title={The $\infty$-harmonic potential is not always an $\infty$-eigenfunction}, 
      author={Erik Lindgren},
      year={2012},
      eprint={1210.3303},
      archivePrefix={arXiv},
      primaryClass={math.AP}
}

@article{bungert2021eigenvalue, 
    title={Eigenvalue problems in $\mathrm{L}^\infty$: optimality conditions, duality, and relations with optimal transport}, 
    volume={2}, 
    DOI={10.1090/cams/11}, 
    number={8}, 
    journal={Communications of the American Mathematical Society}, 
    publisher={American Mathematical Society (AMS)}, 
    author={Bungert, Leon and Korolev, Yury}, 
    year={2022}, 
    month={10}, 
    pages={345–373}
}

@article{roith2023continuum,
  title={Continuum limit of Lipschitz learning on graphs},
  author={Roith, Tim and Bungert, Leon},
  journal={Foundations of Computational Mathematics},
  volume={23},
  number={2},
  pages={393--431},
  year={2023},
  publisher={Springer}
}

@article{calder2019consistency,
	doi = {10.1137/18m1199241},
	url = {https://doi.org/10.1137/18m1199241},
	year = {2019},
	month = jan,
	publisher = {Society for Industrial {\&} Applied Mathematics ({SIAM})},
	volume = {1},
	number = {4},
	pages = {780--812},
	author = {Jeff Calder},
	title = {{Consistency of Lipschitz Learning with Infinite Unlabeled Data and Finite Labeled Data}},
	journal = {{SIAM} Journal on Mathematics of Data Science}
}

@article{bungert2023uniform,
  title={Uniform convergence rates for Lipschitz learning on graphs},
  author={Bungert, Leon and Calder, Jeff and Roith, Tim},
  journal={IMA Journal of Numerical Analysis},
  volume={43},
  number={4},
  pages={2445--2495},
  year={2023},
  publisher={Oxford University Press}
}

@online{bungert2022ratio,
  title={Ratio convergence rates for Euclidean first-passage percolation: Applications to the graph infinity Laplacian},
  author={Bungert, Leon and Calder, Jeff and Roith, Tim},
  eprint={2210.09023},
  archivePrefix={arXiv},
  primaryClass={math.PR},
  year={2022}
}

@article{fadili2023limits,
    author = {Fadili, Jalal and Forcadel, Nicolas and Tuyen Nguyen, Thi and Zantout, Rita},
    title = "{Limits and consistency of nonlocal and graph approximations to the Eikonal equation}",
    journal = {IMA Journal of Numerical Analysis},
    year = {2023},
    month = {01},
    issn = {0272-4979},
    doi = {10.1093/imanum/drac082}
}

@article{calder2018lecture,
  title={Lecture notes on viscosity solutions},
  author={Calder, Jeff},
  journal={Online Lecture Notes: \url{http://www-users.math.umn.edu/jwcalder/viscosity\_solutions.pdf}},
  year={2018}
}

@article{da2019maximal,
  title={Maximal solutions for the $\infty$-eigenvalue problem},
  author={Da Silva, Joao Vitor and Rossi, Julio D and Salort, Ariel M},
  journal={Advances in Calculus of Variations},
  volume={12},
  number={2},
  pages={181--191},
  year={2019},
  publisher={De Gruyter}
}

@online{brustad2022solution,
  title={The solution of the $\infty$-Laplace equation in the square},
  author={Brustad, Karl K},
  year={2022},
  eprint={2210.03447},
  archivePrefix={arXiv},
  primaryClass={math.AP}
}

@online{brustad2023infinity,
  title={The Infinity-Laplacian in Smooth Convex Domains and in a Square},
  author={Brustad, Karl K and Lindgren, Erik and Lindqvist, Peter},
  eprint={2301.09022},
  archivePrefix={arXiv},
  primaryClass={math.AP},
  year={2023}
}

@article{hynd2016inverse,
  title={Inverse iteration for $p$-ground states},
  author={Hynd, Ryan and Lindgren, Erik},
  journal={Proceedings of the American Mathematical Society},
  volume={144},
  number={5},
  pages={2121--2131},
  year={2016}
}

@article{oberman2013finite,
  title={Finite difference methods for the infinity Laplace and $p$-Laplace equations},
  author={Oberman, Adam M},
  journal={Journal of Computational and Applied Mathematics},
  volume={254},
  pages={65--80},
  year={2013},
  publisher={Elsevier}
}

@incollection{bungert2022gradient,
  title={Gradient flows and nonlinear power methods for the computation of nonlinear eigenfunctions},
  author={Bungert, Leon and Burger, Martin},
  booktitle={Handbook of Numerical Analysis},
  volume={23},
  pages={427--465},
  year={2022},
  publisher={Elsevier}
}

@article{bungert2021nonlinear,
  title={Nonlinear power method for computing eigenvectors of proximal operators and neural networks},
  author={Bungert, Leon and Hait-Fraenkel, Ester and Papadakis, Nicolas and Gilboa, Guy},
  journal={SIAM Journal on Imaging Sciences},
  volume={14},
  number={3},
  pages={1114--1148},
  year={2021},
  publisher={SIAM}
}

@article{gilboa2021iterative,
  title={Iterative methods for computing eigenvectors of nonlinear operators},
  author={Gilboa, Guy},
  journal={Handbook of Mathematical Models and Algorithms in Computer Vision and Imaging: Mathematical Imaging and Vision},
  pages={1--28},
  year={2021},
  publisher={Springer}
}

@inproceedings{bungert2019computing,
  title={Computing nonlinear eigenfunctions via gradient flow extinction},
  author={Bungert, Leon and Burger, Martin and Tenbrinck, Daniel},
  booktitle={Scale Space and Variational Methods in Computer Vision: 7th International Conference, SSVM 2019, Hofgeismar, Germany, June 30--July 4, 2019, Proceedings 7},
  pages={291--302},
  year={2019},
  organization={Springer}
}

@article{cohen2020introducing,
  title={Introducing the $p$-Laplacian spectra},
  author={Cohen, Ido and Gilboa, Guy},
  journal={Signal Processing},
  volume={167},
  pages={107281},
  year={2020},
  publisher={Elsevier}
}

@article{kawohl2006positive,
  title={Positive eigenfunctions for the $p$-Laplace operator revisited},
  author={Kawohl, Bernd and Lindqvist, Peter},
  journal={Analysis},
  volume={26},
  number={4},
  pages={545--550},
  year={2006},
  publisher={De Gruyter Oldenbourg}
}

@article{del2022finite,
  title={A finite difference method for the variational $p$-Laplacian},
  author={del Teso, F{\'e}lix and Lindgren, Erik},
  journal={Journal of Scientific Computing},
  volume={90},
  number={1},
  pages={67},
  year={2022},
  publisher={Springer}
}

\end{document}